\newtheorem*{rep@theorem}{\rep@title}
\newcommand{\newreptheorem}[2]{%
\newenvironment{rep#1}[1]{%
 \def\rep@title{#2 \ref{##1}}%
 \begin{rep@theorem}}%
 {\end{rep@theorem}}}
\newtheorem{Defi}{Definition}
\newtheorem{Thm}{Theorem}
\newtheorem{Prop}[Thm]{Proposition}
\newtheorem{Lem}{Lemma}
\newtheorem{Rem}{Remark}
\newtheorem*{Prop*}{Proposition}
\newtheorem*{Cor*}{Corollary}
\newtheorem*{Thm*}{Theorem}
\def\eps{\varepsilon}
\def\R{{\mathbb R}}
\def\eps{\varepsilon}
\DeclareMathOperator{\Tr}{Tr}
\newcommand{\na}{\nabla}
\newcommand{\pa}{\partial}
\begin{document}
\title[Diffusion limit for a Fokker-Planck]{Diffusion limit for a Vlasov-Fokker-Planck Equation\\ in bounded domains}

\bibliographystyle{alpha}

\author[Ludovic Cesbron]{Ludovic Cesbron}
\address{L.C.: DPMMS, Centre for Mathematical Sciences, Wilberforce Road, Cambridge CB3 0WB, United Kingdom.}
\email{lpc31@cam.ac.uk}

\author[Harsha Hutridurga]{Harsha Hutridurga}
\address{H.H.: Department of Mathematics, Imperial College London, London, SW7 2AZ, United Kingdom.}
\email{h.hutridurga-ramaiah@imperial.ac.uk}

\begin{abstract}
We derive a diffusion approximation for the kinetic Vlasov-Fokker-Planck equation in bounded spatial domains with specular reflection type boundary conditions. 
The method of proof involves the construction of a particular class of test functions to be chosen in the weak formulation of the kinetic model. 
This involves the analysis of the underlying Hamiltonian dynamics of the kinetic equation coupled with the reflection laws at the boundary.
This approach only demands the solution family to be weakly compact in some weighted Hilbert space rather than the much tricky $\mathrm L^1$ setting.
\end{abstract}

\maketitle


\section{Introduction}
\label{sec:intro}

\subsection{The Vlasov-Fokker-Planck equation}\label{subsec:VFP}

In this paper, we study the diffusion limit of Vlasov-Fokker-Planck equation in a bounded spatial domain with specular reflections on the boundary. The equation we consider models the behavior of a low density gas in the absence of macroscopic force field. Introducing the probability density $f(t,x,v)$, i.e., the probability of finding a particle with velocity $v$ at time $t$ and position $x$, we consider the evolution equation
\begin{subequations}
\begin{align} 
&\partial_t f + v\cdot \nabla_x f = \mathcal{L}f := \nabla_v\cdot \Big( \nabla_v f + vf \Big) & \mbox{ for }(t,x,v)\in(0,T)\times\Omega\times\R^d,\label{eq:vfp}\\
&f(0,x,v) = f^{in}(x,v)  & \mbox{ for }(x,v)\in\Omega\times\R^d. \label{eq:vfpin}
\end{align}
\end{subequations}
The left hand side of \eqref{eq:vfp} models the free transport of particles, while the Fokker-Planck operator $\mathcal{L}$ on the right hand side describes the interaction of the particles with the background. It can be interpreted as a deterministic description of a Langevin equation for the velocity of the particles:
\begin{align*}
\dot{v}(t) = -\nu v(t) + W(t),
\end{align*}
where the friction coefficient $\nu$ will be assumed, without loss of generality, equal to $1$ and $W(t)$ is a Gaussian white noise. We consider (\ref{eq:vfp}) on a smooth bounded domain $\Omega\subset\R^d$ in the sense that there exists a smooth function $\zeta:\R^d \mapsto \R$ such that 
\begin{equation} \label{eq:defzeta}
\Omega
= \lbrace
x\in\R^d
\mbox{ s.t. }
\zeta(x)<0
\rbrace;
\qquad
\partial\Omega
= \lbrace
x\in\R^d
\mbox{ s.t. }
\zeta(x)=0
\rbrace.
\end{equation}
In order to define a normal vector at each point on the boundary we assume that $\nabla_x\zeta(x) \neq 0$ for any $x$ such that $\zeta(x)\ll 1$ and we define the unit outward normal vector, for any $x\in\partial\Omega$, as
\begin{align*}
n(x)
:=
\frac{\nabla_x \zeta(x)}{\left| \nabla_x \zeta(x) \right|}.
\end{align*}
Moreover, we also assume that $\Omega$ is strongly convex, namely that there exists a constant $C_\zeta >0$ such that 
\begin{align}\label{eq:VFP:strong-convex}
\sum_{i,j=1}^d \xi_i \frac{\partial^2 \zeta}{\partial x_i \partial x_j} \xi_j \geq C_\zeta |\xi|^2 \qquad \forall\, \xi \in \R^d.
\end{align}
To define boundary conditions in the phase space, we introduce the following notations:
\begin{align*}
& \Sigma := \lbrace (x,v) \in \partial\Omega\times\R^d \rbrace 
& \mbox{ Phase space Boundary,}
\\
& \Sigma_+ := \lbrace (x,v) \in \partial\Omega\times\R^d\mbox{ such that } v \cdot n(x) > 0 \rbrace
& \mbox{ Outgoing Boundary,}
\\
& \Sigma_- := \lbrace (x,v) \in \partial\Omega\times\R^d\mbox{ such that } v \cdot n(x) < 0 \rbrace
& \mbox{ Incoming Boundary,}
\\
& \Sigma_0 := \lbrace (x,v) \in \partial\Omega\times\R^d\mbox{ such that } v \cdot n(x) = 0 \rbrace
& \mbox{ Grazing set.}
\end{align*}
We denote by $\gamma f$ the trace of $f$ on $\Sigma$. Boundary conditions for \eqref{eq:vfp} take the form of a balance law between the traces of $f$ on $\Sigma_+$ and $\Sigma_-$ which we denote by $\gamma_+ f$ and $\gamma_- f$ respectively. We shall consider, throughout this paper, the specular reflection boundary condition which is illustrated in Figure \ref{fig:SRop} and reads
\begin{align}
\label{eq:VFP:specular-reflection}
\gamma_- f(t,x,v) 
= 
\gamma_+ f(t,x,\mathcal{R}_xv)
\quad 
\mbox{ for }(t,x,v)\in(0,T)\times\Sigma_-,
\end{align}
where $\mathcal{R}_x$ is the reflection operator on the space of velocities given by
\begin{align*}
\mathcal{R}_x v 
:= 
v - 2 \big( v \cdot n(x) \big) n(x).
\end{align*}
Note that this reflection operator changes the direction of the velocity at the boundary but it preserves the magnitude, i.e., $\vert\mathcal{R}_x v\vert = \vert v\vert$.
\begin{figure}[h]
\centering
\includegraphics[width=8cm,height=6.5cm]{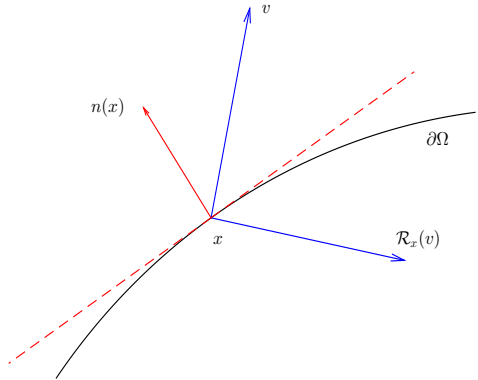}
\caption{Specular reflection operator}
\label{fig:SRop}
\end{figure}

\subsection{Main result}\label{ssec:main-result}

In order to investigate the diffusion limit of \eqref{eq:vfp}-\eqref{eq:vfpin}, we introduce the Knudsen number $0<\eps\ll 1$ which represents the ratio of the mean free path to the macroscopic length scale, or equivalently the ratio of the mean time between two kinetic interactions to the macroscopic time scale. We rescale time as $t' = \eps t$ and also introduce a coefficient $\eps^{-1}$ in front of the Fokker-Planck operator in \eqref{eq:vfp} to model the number of collision per unit of time going to infinity. The rescaled equation, thus becomes
\begin{subequations} 
\begin{align}
&\eps \partial_t f^\eps + v \cdot \nabla_x f^\eps = \frac{1}{\eps} \nabla_v \cdot \left( v f^\eps + \nabla_v f^\eps \right) & \mbox{for }(t,x,v)\in (0,T)\times\Omega\times\R^d, \label{eq:main:vfp-eps} \\
& f^\eps (0,x,v) = f^{in} (x,v) & \mbox{for }(x,v)\in\Omega\times\R^d, \label{eq:main:vfpin-eps}\\[0.2 cm]
& \gamma_-f^\eps (t,x,v ) = \gamma_+ f^\eps \left(t,x,\mathcal{R}_x v \right) & \mbox{for }(t,x,v)\in (0,T)\times \Sigma_-. \label{eq:main:vfp-specular-eps}
\end{align}
\end{subequations}
In this paper, we investigate the behavior of the solution $f^\eps$ in the $\eps\to0$ limit. The characterization of the asymptotic behavior of $f^\eps(t,x,v)$ is the object of our main result.
\begin{Thm}\label{thm:main:statement} 
Assume the initial datum $f^{in}(x,v)$ satisfies
\begin{align*}
f^{in}(x,v)\geq 0 \hspace{0.5cm} \forall (x,v)\in\Omega\times\R^d;
\qquad
f^{in} \in \mathrm L^2\Big(\Omega\times\R^d, \mathcal{M}^{-1}(v) {\rm d}x{\rm d}v \Big),
\end{align*}
where $\mathcal{M}(v)$ is the centered Gaussian
\begin{align}\label{eq:main:maxwellian-normalization}
\mathcal{M}(v) := \frac{1}{(2\pi)^{d/2}} e^{\frac{-|v|^2}{2}}.
\end{align}
Let $f^\eps(t,x,v)$ be a weak solution to the initial boundary value problem \eqref{eq:main:vfp-eps}-\eqref{eq:main:vfpin-eps}-\eqref{eq:main:vfp-specular-eps}. Then
\begin{align*}
f^\eps(t,x,v) \rightharpoonup \rho(t,x)\mathcal{M}(v)\quad \mbox{ in }\mathrm L^\infty\Big(0,T; \mathrm L^2\Big(\Omega\times\R^d,\mathcal{M}^{-1}(v){\rm d}v{\rm d}x\Big)\Big)\, \mbox{ weak-*}
\end{align*}
as $\eps\to0$, for some $\rho\in \mathrm L^\infty(0,T;\mathrm L^2(\Omega))$.
Furthermore, if the spatial domain $\Omega$ is a ball in $\R^d$ then the limit $\rho(t,x)$ is a weak solution to the diffusion equation
\begin{subequations}
\begin{align}
&\partial_t \rho - \Delta_x \rho = 0 & \mbox{for } (t,x) \in (0,T)\times\Omega,\label{eq:main:diffusion-equation}\\[0.2 cm]
& \rho(0,x) = \rho^{in}(x) & \mbox{for } x\in\Omega,\label{eq:main:diffusion-initial}\\[0.2 cm]
& \nabla_x\rho(t,x) \cdot n(x) = 0 & \mbox{for } (t,x)\in(0,T)\times\partial\Omega,\label{eq:main:diffusion-neumann}
\end{align}
\end{subequations}
with the initial datum
\begin{align*}
\rho^{in}(x) = \int\limits_{\R^d} f^{in}(x,v)\, {\rm d}v.
\end{align*}
\end{Thm}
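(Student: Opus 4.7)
The strategy separates the problem into two parts: establishing weak compactness with the profile $f^0 = \rho(t,x)\mathcal{M}(v)$ (valid on any strongly convex smooth $\Omega$), and then identifying $\rho$ as a weak solution of the Neumann heat equation (where the ball assumption enters).

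First, I would derive the basic a priori estimate by testing \eqref{eq:main:vfp-eps} against $f^\eps/\mathcal{M}(v)$ and integrating over $\Omega\times\R^d$. The transport term produces a boundary integral proportional to $\int_\Sigma (v\cdot n)|\gamma f^\eps|^2 \mathcal{M}^{-1}$, which vanishes thanks to specular reflection: the map $v\mapsto \mathcal{R}_x v$ preserves $|v|$ (hence $\mathcal{M}^{-1}$) while flipping the sign of $v\cdot n$. The Fokker-Planck operator yields the dissipation $-\eps^{-2}\int \mathcal{M}|\nabla_v(f^\eps/\mathcal{M})|^2$, resulting in
\begin{align*}
\sup_{t\in[0,T]}\|f^\eps/\sqrt{\mathcal{M}}\|_{L^2}^2 \,+\, \frac{1}{\eps^2}\int_0^T\!\!\int_\Omega\!\!\int_{\R^d} \mathcal{M}\left|\nabla_v\frac{f^\eps}{\mathcal{M}}\right|^2 \leq \|f^{in}/\sqrt{\mathcal{M}}\|_{L^2}^2.
\end{align*}
Weak-$*$ extraction then gives $f^\eps \rightharpoonup f^0$ in $L^\infty(0,T;L^2(\mathcal{M}^{-1}))$, and the dissipation forces $\nabla_v(f^0/\mathcal{M})\equiv 0$, so $f^0 = \rho(t,x)\mathcal{M}(v)$ with $\rho\in L^\infty(0,T;L^2(\Omega))$, which proves the first assertion.

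To identify $\rho$ on the ball, I would implement the test function strategy advertised in the abstract. For $\psi\in C^\infty_c([0,T)\times\bar\Omega)$, set $\phi^\eps(t,x,v):=\psi(t,x) + \eps\, v\cdot\nabla_x\psi(t,x)$. Using $\mathcal{L}^*\psi=0$ and $\mathcal{L}^*(v\cdot\nabla_x\psi) = -v\cdot\nabla_x\psi$, one computes
\begin{align*}
-\eps\,\partial_t\phi^\eps - v\cdot\nabla_x\phi^\eps - \tfrac{1}{\eps}\mathcal{L}^*\phi^\eps = -\eps\,\partial_t\psi - \eps\,v\otimes v : \nabla_x^2\psi - \eps^2\,v\cdot\nabla_x\partial_t\psi,
\end{align*}
so the singular $\eps^{-1}$ term cancels the transport contribution. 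Inserting $\phi^\eps$ into the weak formulation and dividing by $\eps$, the bulk piece converges—using $f^\eps\rightharpoonup \rho\mathcal{M}$ and $\int v\otimes v\,\mathcal{M}\,dv = I$—to $\iint \rho(-\partial_t\psi - \Delta\psi)$, while the initial datum contributes $-\int \rho^{in}\psi(0,\cdot)$. For the boundary term, after using specular reflection to change variables $v\mapsto\mathcal{R}_x v$ on $\Sigma_-$ and exploiting the identity $v-\mathcal{R}_x v = 2(v\cdot n)n$, I obtain
\begin{align*}
\int_0^T\!\!\int_{\Sigma_+}(v\cdot n)\gamma_+f^\eps\big[\phi^\eps(v)-\phi^\eps(\mathcal{R}_x v)\big] = 2\eps\int_0^T\!\!\int_{\Sigma_+}(v\cdot n)^2(n\cdot\nabla_x\psi)\gamma_+ f^\eps,
\end{align*}
which, divided by $\eps$, should converge to $\int_0^T\!\int_{\partial\Omega}\rho\,(n\cdot\nabla_x\psi)$ (using the half-Gaussian identity $\int_{v\cdot n>0}(v\cdot n)^2\mathcal{M}\,dv = 1/2$). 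Integrating $\int\rho\Delta\psi$ by parts and assembling the bulk, boundary, and initial pieces produces precisely the weak formulation of \eqref{eq:main:diffusion-equation}--\eqref{eq:main:diffusion-neumann}.

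The hardest step is making this boundary limit rigorous: our estimates yield only weak convergence of $f^\eps$ in the bulk, not of the trace $\gamma_+f^\eps$, and a priori control of boundary traces for Vlasov-Fokker-Planck equations is notoriously delicate. This is where the ball hypothesis enters. For $\Omega = B_R$, the specular billiard flow preserves both $|v|$ and the angular momentum $x\wedge v$; these additional first integrals of the Hamiltonian dynamics make it possible to enrich the naive corrector $\eps\,v\cdot\nabla_x\psi$ by a boundary-layer term adapted to the reflection law, so that the test function satisfies the reflection symmetry $\phi^\eps(x,v) = \phi^\eps(x,\mathcal{R}_x v)$ on $\partial\Omega$ at the required order and the trace contribution can be identified through interior quantities alone. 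For a general strongly convex domain the billiard map lacks such invariants, which is why the identification of the limit equation is established here only in the ball case.
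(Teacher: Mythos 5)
Your first part --- the energy estimate, the cancellation of the boundary flux under specular reflection, and the identification of the weak-$*$ limit as $\rho(t,x)\mathcal{M}(v)$ via the dissipation and the Gaussian Poincar\'e inequality --- is correct and is essentially Proposition \ref{prop:soln-VFP:weak-limit}. The second part, however, has a genuine gap. The corrector $\phi^\eps=\psi+\eps\, v\cdot\nabla_x\psi$ with $\psi\in C^\infty_c([0,T)\times\overline{\Omega})$ arbitrary does not satisfy the reflection compatibility $\gamma_+\phi(t,x,v)=\gamma_-\phi(t,x,\mathcal{R}_x v)$ required of test functions in \eqref{eq:soln-VFP:weak-soln-test-function}: one has $\phi^\eps(t,x,\mathcal{R}_xv)-\phi^\eps(t,x,v)=-2\eps\,(v\cdot n)(n\cdot\nabla_x\psi)$. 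Consequently the weak formulation \eqref{eq:soln-VFP:weak-soln-definition} --- which contains no boundary integral precisely because of that compatibility requirement --- cannot be applied to your $\phi^\eps$, and you are forced to introduce the trace term $\int(v\cdot n)\,\gamma_+f^\eps\,[\phi^\eps(v)-\phi^\eps(\mathcal{R}_xv)]$. You correctly observe that no control of $\gamma_+f^\eps$ follows from the $\mathrm L^2(\mathcal{M}^{-1})$ estimate, but the proposed resolution (``enrich the corrector by a boundary-layer term adapted to the reflection law'' using the invariants of the billiard in a ball) is never constructed; as written, the central difficulty of the problem is deferred to an object that is not exhibited. A secondary gap: passing from $\iint\rho\,\Delta_x\psi$ to $-\iint\nabla_x\rho\cdot\nabla_x\psi$ presupposes $\rho\in \mathrm L^2(0,T;\mathrm H^1(\Omega))$, which is not known at that stage and requires a separate duality argument (the content of the final Proposition of the paper).

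Two further points of comparison. Within your own framework the natural repair is to impose $n\cdot\nabla_x\psi=0$ on $\partial\Omega$ from the outset: then $\psi+\eps\,v\cdot\nabla_x\psi$ is exactly reflection-compatible, the boundary term is absent, and the limit identity $\iint\rho(\partial_t\psi+\Delta_x\psi)+\int\rho^{in}\psi(0,\cdot)=0$ follows for all such $\psi$ with no trace information, the Neumann condition being recovered a posteriori from this variational identity. More importantly, your diagnosis of where the ball hypothesis enters does not match the paper. The paper constructs globally reflection-compatible test functions $\psi(t,\eta(x,\eps v))$ from the end-point map of the specular billiard flow, so that no boundary term ever appears; the ball is needed only in Lemma \ref{lem:eta}, to dominate $\Delta_v\left[\psi(t,\eta(x,\cdot))\right](\eps v)$ in $\mathrm L^2(\mathcal{M}(v)\,{\rm d}x\,{\rm d}v)$ uniformly in $\eps$. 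The obstruction there is the $1/L$ singularity of $\Delta_v\eta$ along nearly grazing trajectories ($L$ being the chord length), whose integrability is verified by an explicit computation exploiting the planarity of billiard orbits in a ball --- not by trace control or angular-momentum boundary layers.
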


\subsection{Plan of the paper}

Section \ref{sec:strategy-proof} gives some heuristics with regard to the strategy of proof for Theorem \ref{thm:main:statement}. In particular, we compare our method of proof with some standard techniques used to prove the diffusion limit for the kinetic Fokker-Planck equation.
In section \ref{sec:soln-VFP}, we define an appropriate notion of weak solution to our initial boundary value problem. In section \ref{sec:auxiliary-problem}, we develop the theory of constructing a special class of test functions using an auxiliary problem. Finally, in section \ref{sec:macro}, we arrive at the parabolic limit equation, thus proving Theorem \ref{thm:main:statement}. In Appendix, we give regularity results associated with some Hamiltonian dynamics that we will need to study the aforementioned auxiliary problem.\\

\noindent{\bf Acknowledgments.} 
The authors would like to acknowledge the support of the ERC grant \textsc{matkit}. 
H.H. also acknowledges the support of the EPSRC programme grant ``Mathematical fundamentals of Metamaterials for multiscale Physics and Mechanic'' (EP/L024926/1).
The authors would like to thank Antoine Mellet for his fruitful suggestions on a preliminary version of this manuscript.
The authors also extend their humble thanks to Cl\'ement Mouhot for interesting discussions on this problem.

\section{Strategy of the proof}\label{sec:strategy-proof}

In this section, we lay out the strategy of our proof for Theorem \ref{thm:main:statement}. We would like to demonstrate the novelty in our approach by citing some comparisons with the standard techniques used in the diffusion approximation for the Vlasov-Fokker-Planck equation. Those techniques were introduced in 1987 by Degond and Mas-Gallic \cite{Degond_1987} for the one dimensional case in bounded domains. They were later improved, in 2000, by Poupaud and Soler \cite{Poupaud_2000} where they consider the more complicated Vlasov-Poisson-Fokker-Planck equation on the whole space and established the diffusion limit for a small enough time interval. More recently, improving the result further, Goudon \cite{Goudon_2005} established in 2005 the global-in-time convergence in dimension 2 with bounds on the entropy and energy of the initial data so as to ensure that singularities do not develop in the limit system and finally, in 2010, El Ghani and Masmoudi proved in \cite{El_Ghani_2010} the global-in-time convergence in higher dimensions with similar initial bounds.\\
In these papers, the analysis with regard to this nonlinear model is quite involved. Let us simply present the analysis in \cite{Poupaud_2000} adapted to the linear model \eqref{eq:main:vfp-eps}. The idea is to consider the continuity equation for the local densities $\rho^\eps(t,x)$ given by
\begin{align*}
\frac{\partial \rho^\eps}{\partial t}(t,x)
+ \frac{1}{\eps} \nabla_x\cdot j^\eps(t,x)
= 0,
\end{align*}
where the current density $j^\eps(t,x)$ is defined as
\begin{align*}
j^\eps(t,x)
:=
\int\limits_{\R^d}
vf^\eps(t,x,v)
\, {\rm d}v.
\end{align*}
The principal idea is to obtain
\begin{equation}\label{eq:strategy:poupaud-soler-limits}
\begin{aligned}
& \rho^\eps \rightharpoonup \rho \qquad \mbox{ weakly in }\mathrm L^1((0,T)\times\Omega),\\
& \frac{1}{\eps}j^\eps \rightharpoonup \nabla_x \rho \qquad \mbox{ in }\mathcal{D}'((0,T)\times\Omega)
\end{aligned}
\end{equation}
as $\eps\to0$. The article \cite{Poupaud_2000} is concerned with the analysis in the full spatial domain $\R^d$. In order to derive the limit boundary condition -- we refer the interested reader to the paper \cite{Wu_2015} of Wu, Lin and Liu for more details -- one can multiply the specular reflection boundary condition \eqref{eq:main:vfp-specular-eps} by $\left( v\cdot n(x)\right)$ and integrate over the incoming velocities at the point $x\in\partial\Omega$ yielding
\begin{align*}
\int\limits_{v\cdot n(x)<0}
\gamma_- f^\eps(t,x,v) 
\left(
v\cdot n(x)
\right)
\, {\rm d}v
= 
\int\limits_{v\cdot n(x)<0}
\gamma_+f^\eps(t,x,\mathcal{R}_x(v))
\left(
v\cdot n(x)
\right)
\, {\rm d}v.
\end{align*}
Making the change of variables $w=\mathcal{R}_x(v)$ on the right hand side of the above expression yields
\begin{align*}
\int\limits_{v\cdot n(x)<0}
\gamma_- f^\eps(t,x,v) 
\left(
v\cdot n(x)
\right)
\, {\rm d}v
= 
-
\int\limits_{w\cdot n(x)>0}
\gamma_+f^\eps(t,x,w)
\left(
w\cdot n(x)
\right)
\, {\rm d}w.
\end{align*}
This implies the following
\begin{align*}
\int\limits_{\R^d}
\gamma f^\eps 
\left(
v\cdot n(x)
\right)
\, {\rm d}v
= 0
\implies
j^\eps(t,x)\cdot n(x) = 0. 
\end{align*}
Taking the limits \eqref{eq:strategy:poupaud-soler-limits} into consideration, we do have the homogeneous Neumann condition on the boundary in the $\eps\to0$ limit.

Our strategy is essentially different in the sense that we exploit the hyperbolic structure of the Vlasov-Fokker-Planck equation that appears in Fourier space, as we will explain in section \ref{sec:auxiliary-problem}, and which reveals, when coupled with the reflective boundaries, the underlying Hamiltonian dynamics of the kinetic equation. We will take advantage of the dynamics by constructing a special class of test functions for the weak formulation \eqref{eq:soln-VFP:weak-soln-definition} of the initial boundary value problem \eqref{eq:main:vfp-eps}-\eqref{eq:main:vfp-specular-eps}-\eqref{eq:main:vfpin-eps} and then passing to the limit for such test functions, only using the weak $\mathrm L^2$-compactness result (see Proposition \ref{prop:soln-VFP:weak-limit}).

\subsection{Efficiency of our approach}
To justify the interest of our method, we prove the diffusion limit in full space, i.e., when $\Omega=\R^d$. It only takes a few lines which shows how efficient our method is in the Fokker-Planck context. Let us consider the scaled (diffusive scaling) Vlasov-Fokker-Planck equation for the probability density $f^\eps(t,x,v)$ in the full space.
\begin{align*} 
&\eps\partial_t f^\eps + v\cdot \nabla_x f^\eps = \frac{1}{\eps}\nabla_v\cdot \Big( \nabla_v f^\eps + vf^\eps \Big) & \mbox{ for }(t,x,v)\in(0,T)\times\R^d\times\R^d,\\
&f^\eps(0,x,v) = f^{in}(x,v)  & \mbox{ for }(x,v)\in\R^d\times\R^d.
\end{align*}
This equation has a unique weak solution $f^\eps(t,x,v)$ which satisfies
\begin{align*}
f^\eps \in \mathrm L^2\big( (0,T)\times\R^d_x ; \mathrm H^1(\R^d_v)\big) \mbox{ and } \partial_t f^\eps + v\cdot\nabla_x f^\eps \in \mathrm L^2\big( (0,T)\times\R^d_x ; \mathrm H^{-1}(\R^d_v)\big) 
\end{align*}
as was proven by Degond in the appendix of \cite{Degond_1986}. Moreover, the Fokker-Planck operator is dissipative in the sense that 
\begin{align*}
- \iint\limits_{\R^d\times\R^d} f^\eps \mathcal{L}\big(f^\eps\big) \frac{{\rm d}x {\rm d}v}{\mathcal{M}(v)} \geq 0
\end{align*}
from which, as we will prove in section \ref{sec:soln-VFP}, we can show that $f^\eps$ converges weakly$\ast$ in $\mathrm L^\infty\left( 0,T; \mathrm L^2\left(\Omega\times\R^d, \mathcal{M}^{-1}(v){\rm d}x{\rm d}v\right) \right)$ to $\rho(t,x)\mathcal{M}(v)$ where $\rho(t,x)$ is the limit of the local densities $\rho^\eps (t,x) := \int_{\R^d} f^\eps\, {\rm d}v$.\\

For any $\psi \in \mathcal{C}^\infty_c ((0,T)\times\R^d)$ we construct the test function $\phi^\eps(t,x,v) = \varphi(t,x+\eps v)$ with which the weak formulation of the Vlasov-Fokker-Planck equation reads
\begin{align*}
\iiint\limits_{(0,T)\times\R^d\times\R^d}
f^\eps(t,x,v) 
\Big(
\eps^2\partial_t \phi^\eps 
+
\eps v\cdot \nabla_x \phi^\eps 
& +
\Delta_v \phi^\eps 
-
v\cdot \nabla_v \phi^\eps 
\Big)
(t,x,v)
\, {\rm d}v\, {\rm d}x\, {\rm d}t
\\
& +
\eps^2
\iint\limits_{\R^d\times\R^d}
f^{in}(x,v)
\phi^\eps(0,x,v)
\, {\rm d}v\, {\rm d}x
= 0.
\end{align*}
Our particular choice of the test functions enables us to have
\begin{align*}
\eps v\cdot \nabla_x \phi^\eps 
=
v\cdot \nabla_v \phi^\eps
\qquad\mbox{ and }\qquad
\Delta_v \phi^\eps = \eps^2 \Delta_x \phi^\eps.
\end{align*}
Thus, we have
\begin{align*}
\iiint\limits_{(0,T)\times\R^d\times\R^d}
f^\eps(t,x,v) 
\Big(
\partial_t \varphi
+
\Delta_x \varphi 
\Big)
(t,x+\eps v)
\, {\rm d}v\, {\rm d}x\, {\rm d}t
+
\iint\limits_{\R^d\times\R^d}
f^{in}(x,v)
\varphi(0,x+\eps v)
\, {\rm d}v\, {\rm d}x
= 0.
\end{align*}
Passing to the limit in the above expression as $\eps\to0$, using the weak convergence of $f^\eps$ and the regularity of $\varphi$ with respect to both its variables, yields
\begin{align*}
\iint\limits_{(0,T)\times\R^d}
\rho(t,x) 
\Big(
\partial_t \varphi
+
\Delta_x \varphi 
\Big)
(t,x)
\, {\rm d}x\, {\rm d}t
+
\int\limits_{\R^d}
\rho^{in}(x)
\varphi(0,x)
\, {\rm d}x
= 0
\end{align*}
which is the weak formulation of
\begin{align*}
&\partial_t \rho - \Delta_x \rho  = 0 \qquad & \mbox{ for }(t,x)\in (0,T)\times\R^d,
\\
&\rho(0,x)  = \rho^{in}(x) & \mbox{ for }x\in \R^d.
\end{align*}

\section{Solutions of the Vlasov-Fokker-Planck equation}\label{sec:soln-VFP}

Several works from the 80's and 90's investigate the existence of solution to the Vlasov-Fokker-Planck equation. We refer the interested reader to \cite{Degond_1986} for the global existence of smooth solution in the whole space in space dimensions $1$ and $2$ and to \cite{Carrillo_1998} for global weak solutions on a bounded domain with absorbing-type boundary condition. More recently, Mellet and Vasseur established existence of global weak solution with reflection-law on the boundary in \cite{Mellet_2007}.

\subsection{Existence of weak solution}\label{subsec:notion-weak-solution}
The present work is in a very similar framework and we will therefore use the same kind of definition for weak solution as in \cite{Mellet_2007}.
\begin{Defi} \label{def:soln-VFP:weak-solution}
We say that $f(t,x,v)$ is a weak solution of \eqref{eq:vfp}-\eqref{eq:vfpin}-\eqref{eq:VFP:specular-reflection} on $[0,T]$ if
\begin{equation}\label{eq:soln-VFP:regularity-f}
\begin{aligned}
&f(t,x,v) \geq 0 \qquad \forall (t,x,v)\in [0,T]\times\Omega\times\R^d,\\[0.2 cm]
& f\in C\left( [0,T];\mathrm L^1(\Omega\times\R^d)\right) \cap \mathrm L^\infty \left(0,T; \mathrm L^1\cap \mathrm L^\infty(\Omega\times\R^d)\right) 
\end{aligned}
\end{equation}
and \eqref{eq:vfp} holds in the sense that for any $\phi(t,x,v)$ such that
\begin{equation} \label{eq:soln-VFP:weak-soln-test-function}
\begin{aligned}
&\phi \in C^\infty \left( [0,T]\times\overline{\Omega}\times\R^d\right), \qquad \phi(T,\cdot,\cdot)=0, \\[0.2 cm]
&\gamma_+\phi(t,x,v) = \gamma_-\phi\left(t,x,\mathcal{R}_x(v)\right) \quad \forall (t,x,v)\in [0,T]\times\Sigma_+,
\end{aligned}
\end{equation}
we have
\begin{equation}\label{eq:soln-VFP:weak-soln-definition}
\begin{aligned}
\iiint\limits_{(0,T)\times\Omega\times\R^d} f(t,x,v)
\Big( 
\partial_t\phi + v\cdot \nabla_x \phi - & v\cdot \nabla_v \phi + \Delta_v \phi 
\Big) (t,x,v)
\, {\rm d}v\, {\rm d}x\, {\rm d}t
\\
&+ \iint\limits_{\Omega\times\R^d} f^{in}(x,v)\phi(0,x,v)
\, {\rm d}v\, {\rm d}x
= 0.
\end{aligned}
\end{equation}
\end{Defi}    

Such a definition is required as it is well-known for kinetic equations that the specular reflection condition causes a loss in regularity of the solution, in comparison with absorption type boundary condition, as is explained in detail in \cite{Mischler_2010}. Hence, we introduce the above formulation where the boundary condition is satisfied in a weak sense. With such a notion of weak solution, we have the following result of existence from \cite{Mellet_2007}.

\begin{Thm}\label{thm:soln-VFP:existence}
Let the initial data $f^{in}(x,v)$ satisfy
\begin{align}\label{eq:soln-VFP:regularity-initial-1}
&f^{in}(x,v)\geq 0 \quad \forall (x,v)\in\Omega\times\R^d;\qquad
f^{in} \in \mathrm L^2(\Omega\times\R^d, \mathcal{M}^{-1}(v) {\rm d}x{\rm d}v).
\end{align}
Then there exists a weak solution to \eqref{eq:vfp}-\eqref{eq:vfpin} satisfying \eqref{eq:VFP:specular-reflection} defined globally-in-time. Moreover, we have the a priori estimate
\begin{align}\label{eq:soln-VFP:apriori}
\underset{t\in(0,T)}{\sup} \iint\limits_{\Omega\times\R^d} |f(t,x,v)|^2 \frac{{\rm d}x{\rm d}v}{\mathcal{M}(v)} + \int\limits_0^T \mathcal{D}(f)(t)\, {\rm d}t \leq \iint\limits_{\Omega\times\R^d} |f^{in}(x,v)|^2 \frac{{\rm d}x{\rm d}v}{\mathcal{M}(v)},
\end{align}
where the dissipation $\mathcal{D}$ is given by:
\begin{align} \label{eq:soln-VFP:dissipation}
\mathcal{D}(f) = -2\iint\limits_{\Omega\times\R^d} f(t,x,v) \mathcal{L}f(t,x,v) \frac{{\rm d}x{\rm d}v}{\mathcal{M}(v)}.
\end{align}
\end{Thm}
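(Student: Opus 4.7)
The plan is to follow the strategy of Mellet--Vasseur \cite{Mellet_2007} adapted to the present setting, by coupling a regularization procedure with a weighted $\mathrm L^2$ energy estimate that is compatible with the specular reflection condition. The four main ingredients will be: (i) a smooth approximation of \eqref{eq:vfp}--\eqref{eq:VFP:specular-reflection} for which solutions are known to exist by standard parabolic theory; (ii) the formal \emph{a priori} estimate \eqref{eq:soln-VFP:apriori}, which is stable under this approximation; (iii) trace theory for kinetic transport operators, so that the specular condition retains meaning in the limit; and (iv) compactness arguments (weak $\mathrm L^2$ for $f$ and the dissipation, velocity averaging for local densities) that allow passage to the limit in the weak formulation \eqref{eq:soln-VFP:weak-soln-definition}.

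For step (i), I would introduce a regularization $f^\eta$ satisfying the equation with a boundary condition parametrized by $\eta>0$, obtained either by mixing the specular reflection with a small fraction of diffuse (Maxwell) reflection, or by adding a small elliptic term $-\eta\Delta_x f^\eta$ together with a Robin-type boundary condition. On the resulting regularized problem, existence of a smooth nonnegative solution follows from classical parabolic theory, and the maximum principle combined with mass conservation yields uniform $\mathrm L^1\cap\mathrm L^\infty$ bounds after simultaneously mollifying the initial datum. Traces $\gamma_\pm f^\eta$ can then be defined as elements of $\mathrm L^2(\Sigma_\pm,|v\cdot n|\,{\rm d}\sigma)$ using the trace theory developed in \cite{Mischler_2010}.

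For step (ii), the key computation is to test the regularized equation against $f^\eta/\mathcal{M}(v)$ and integrate over $\Omega\times\R^d$. Writing $\mathcal{L}f=\nabla_v\cdot\big(\mathcal{M}\,\nabla_v(f/\mathcal{M})\big)$, an integration by parts in $v$ produces the nonnegative dissipation
\begin{align*}
\mathcal{D}(f^\eta)(t) = 2\iint\limits_{\Omega\times\R^d} \mathcal{M}(v)\left|\nabla_v\!\left(\tfrac{f^\eta}{\mathcal{M}}\right)\right|^2 {\rm d}x\,{\rm d}v,
\end{align*}
while the transport term yields the boundary flux $\tfrac{1}{2}\int_{\Sigma}(v\cdot n)(\gamma f^\eta)^2\mathcal{M}^{-1}\,{\rm d}\sigma\,{\rm d}v$. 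The critical observation is that the change of variables $v=\mathcal{R}_x w$ leaves both $\mathcal{M}(v)$ and $|v\cdot n(x)|$ invariant while reversing the sign of $v\cdot n(x)$; consequently the specular identity $\gamma_- f^\eta(t,x,\mathcal{R}_x w)=\gamma_+ f^\eta(t,x,w)$, up to an $O(\eta)$ correction from the regularization, forces the contributions from $\Sigma_+$ and $\Sigma_-$ to cancel. Integrating in time then delivers \eqref{eq:soln-VFP:apriori} with a constant independent of $\eta$.

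For steps (iii) and (iv), the uniform bound \eqref{eq:soln-VFP:apriori} provides weak-$*$ compactness of $f^\eta$ in $\mathrm L^\infty\bigl(0,T;\mathrm L^2(\mathcal{M}^{-1}\,{\rm d}x\,{\rm d}v)\bigr)$ and weak $\mathrm L^2$ compactness of the dissipation. Combined with the equation itself, the quantity $\partial_t f^\eta + v\cdot\nabla_x f^\eta$ is controlled in a negative-order Sobolev space in $v$, so classical velocity averaging lemmas yield the strong compactness of velocity moments needed to identify the limit in every term of \eqref{eq:soln-VFP:weak-soln-definition}, including the initial-data term. The continuity $f\in C\bigl([0,T];\mathrm L^1(\Omega\times\R^d)\bigr)$ then follows from the equation and the uniform integrability supplied by \eqref{eq:soln-VFP:apriori}. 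The chief obstacle is step (ii): one has to justify the cancellation of the boundary flux despite the delicate trace regularity of weak solutions, which is precisely why the regularization in step (i) must be chosen so that $\gamma_\pm f^\eta$ lie in $\mathrm L^2(\Sigma_\pm,|v\cdot n|\,{\rm d}\sigma)$ and the reflection condition holds in a strong enough sense to survive the limit $\eta\to 0$.
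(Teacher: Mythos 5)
Your proposal is essentially the paper's approach: the authors also prove the theorem by (a) approximating the specular reflection condition, (b) establishing the weighted $\mathrm L^2$ energy identity in which the boundary flux cancels via the change of variables $v=\mathcal{R}_x w$ (using that $\mathcal{M}$ is radial and $|v\cdot n|$ is $\mathcal{R}_x$-invariant), and (c) passing to the limit in the weak formulation; your step (ii) reproduces the paper's formal computation verbatim. The one place you diverge is the approximation device: the paper follows Mellet--Vasseur \cite{Mellet_2007}, where the specular condition is approximated by \emph{induction on Dirichlet (absorbing-type) boundary conditions} -- one solves a sequence of problems with prescribed incoming data, updating the incoming trace from the previous iterate's outgoing trace -- rather than by mixing in diffuse reflection or by elliptic regularization in $x$. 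Your alternatives are also standard and preserve the right sign of the boundary term (for the diffuse part via the Darroz\`es--Guiraud inequality), so this is a cosmetic rather than substantive difference. Two small remarks: since the equation is linear, weak-$*$ convergence in $\mathrm L^\infty(0,T;\mathrm L^2(\mathcal{M}^{-1}{\rm d}x\,{\rm d}v))$ already suffices to pass to the limit in every term of \eqref{eq:soln-VFP:weak-soln-definition} (the test functions are fixed), so velocity averaging is not needed; and your uniform $\mathrm L^1\cap\mathrm L^\infty$ bound cannot follow from mollifying an initial datum that is merely in $\mathrm L^2(\mathcal{M}^{-1}{\rm d}x\,{\rm d}v)$ -- this gap between the hypothesis \eqref{eq:soln-VFP:regularity-initial-1} and the regularity class \eqref{eq:soln-VFP:regularity-f} is, however, already present in the paper's own statement.
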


The proof of the above theorem is similar to the proof of Theorem 2.2 in \cite{Mellet_2007}. It consists of approximating the specular reflection condition \eqref{eq:VFP:specular-reflection} through induction on Dirichlet boundary conditions and showing that regularity \eqref{eq:soln-VFP:regularity-f} and estimates \eqref{eq:soln-VFP:apriori} hold as we pass to the limit in the induction procedure. As it is not the principal focus of this article, we will not give a detailed proof of Theorem \ref{thm:soln-VFP:existence}. However, in an effort to motivate the estimate \eqref{eq:soln-VFP:apriori}, we present the following, rather formal, computation.\\

Assume $f$ has a trace in $\mathrm L^2(0,T;\mathrm L^2(\Sigma_+))$. Multiply \eqref{eq:vfp} by $\mathcal{M}^{-1}(v)f(t,x,v)$ and integrate over the phase space $\Omega\times\R^d$ yielding
\begin{equation}\label{eq:soln-VFP:energy-estimate-1}
\begin{aligned}
&
\frac{{\rm d}}{{\rm d}t}
\iint\limits_{\Omega\times\R^d}
|f(t,x,v)|^2
\, \frac{{\rm d}v\, {\rm d}x}{\mathcal{M}(v)}
+ 
\iint\limits_{\Omega\times\R^d}
v \cdot \nabla_x \left( f(t,x,v) \right)^2
\, \frac{{\rm d}v\, {\rm d}x}{\mathcal{M}(v)}
=
2\iint\limits_{\Omega\times\R^d}
\mathcal{L}f(t,x,v)
f(t,x,v)
\, \frac{{\rm d}v\, {\rm d}x}{\mathcal{M}(v)}.
\end{aligned}
\end{equation}
For the second term on the left hand side of the above expression, using the assumption on the trace of $f$, we write
\begin{align*}
&\iint\limits_{\Omega\times\R^d} v \cdot \nabla_x \left( f(t,x,v) \right)^2
\, \frac{{\rm d}v\, {\rm d}x}{\mathcal{M}(v)}
= 
\iint\limits_\Sigma 
\left|\gamma f\right|^2
\left(v\cdot n(x)\right)
\, \frac{{\rm d}v\, {\rm d}x}{\mathcal{M}(v)}
\\
&
= 
\iint\limits_{\Sigma_+} 
\left|\gamma_+ f(t,x,v) \right|^2 
|v\cdot n(x)| 
\, \frac{{\rm d}v\, {\rm d}\sigma(x)}{\mathcal{M}(v)}
- 
\iint\limits_{\Sigma_-} 
\left|\gamma_- f(t,x,v) \right|^2
|v\cdot n(x)|
\, \frac{{\rm d}v\, {\rm d}\sigma(x)}{\mathcal{M}(v)}
\end{align*}
where, using the specular reflection \eqref{eq:VFP:specular-reflection} and the fact that $\mathcal{M}(v)$ is radial, the change of variable $w=\mathcal{R}_x(v)$ yields
\begin{align*}
\iint\limits_{\Sigma_-}
\left|\gamma_- f(t,x,v) \right|^2
|v\cdot n(x)|
\, \frac{{\rm d}v\, {\rm d}\sigma(x)}{\mathcal{M}(v)}
=
\iint\limits_{\Sigma_+} 
\left|\gamma_+ f(t,x,w) \right|^2
|w\cdot n(x)|
\, \frac{{\rm d}w\, {\rm d}\sigma(x)}{\mathcal{M}(w)}.
\end{align*}
This implies that the second term on the left hand side of the expression \eqref{eq:soln-VFP:energy-estimate-1} does not contribute. Hence, we arrive at the following identity
\begin{align*}
\frac{{\rm d}}{{\rm d}t}
\iint\limits_{\Omega\times\R^d}
|f(t,x,v)|^2
\, \frac{{\rm d}v\, {\rm d}x}{\mathcal{M}(v)}
= - \mathcal{D}(f).
\end{align*}
Integrating the above identity over the time interval $(0,T)$ yields the a priori estimate \eqref{eq:soln-VFP:apriori}.

\subsection{Uniform a priori estimate}\label{ssec:uniform-a-priori-estimates}

The notion of weak solution (Definition \ref{def:soln-VFP:weak-solution}) and the theorem of existence (Theorem \ref{thm:soln-VFP:existence}) hold for the scaled equation \eqref{eq:main:vfp-eps}-\eqref{eq:main:vfpin-eps}-\eqref{eq:main:vfp-specular-eps} for any $\eps>0$. The scaling only changes the estimate \eqref{eq:soln-VFP:apriori} which becomes
\begin{align} \label{eq:soln-VFP:uniform-apriori}
\underset{t\in(0,T)}{\sup} \iint\limits_{\Omega\times\R^d}
|f^\eps (t,x,v)|^2
\, \frac{{\rm d}v\, {\rm d}x}{\mathcal{M}(v)}
+
\frac{1}{\eps^2}
\int\limits_0^T
\mathcal{D}(f^\eps)(t)
\, {\rm d}t
\leq 
\iint\limits_{\Omega\times\R^d}
|f^{in}(x,v)|^2
\, \frac{{\rm d}v\, {\rm d}x}{\mathcal{M}(v)}
\end{align}
as one can formally see by doing the computation involving \eqref{eq:soln-VFP:energy-estimate-1} with the scaling. We shall use the estimate \eqref{eq:soln-VFP:uniform-apriori} to prove the following result.

\begin{Prop}\label{prop:soln-VFP:weak-limit}
Let $f^\eps(t,x,v)$ be a weak solution of the scaled Vlasov-Fokker-Planck equation with specular reflection \eqref{eq:main:vfp-eps}-\eqref{eq:main:vfpin-eps}-\eqref{eq:main:vfp-specular-eps} in the sense of Definition \ref{def:soln-VFP:weak-solution} with an initial datum $f^{in}(x,v)$ which satisfies \eqref{eq:soln-VFP:regularity-initial-1}. Then there exists $\rho \in L^2((0,T)\times\Omega)$ such that
\begin{align}\label{eq:soln-VFP:weak-limit-f-eps-rho-M}
f^\eps \rightharpoonup \rho(t,x)\mathcal{M}(v) \quad \mbox{ weakly in } \mathrm L^2\left( 0,T;\mathrm L^2\left(\Omega\times\R^d, \mathcal{M}^{-1}(v){\rm d}x{\rm d}v\right) \right)
\end{align}
where $\rho(t,x)$ is the weak-* limit of the local densities
\begin{align}\label{eq:soln-VFP:local-density}
\rho^\eps(t,x) 
:=
\int\limits_{\R^d} f^\eps(t,x,v)
\, {\rm d }v
\end{align}
in $\mathrm L^\infty(0,T; \mathrm L^2(\Omega))$.
\end{Prop}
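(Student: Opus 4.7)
The plan is to extract weak-star and weak limits of $f^\eps$ and $\rho^\eps$ from the uniform bound \eqref{eq:soln-VFP:uniform-apriori}, then to use the dissipation term (which is $O(\eps^2)$) to force any weak limit of $f^\eps$ to lie in the kernel of $\mathcal{L}$, and finally to match $\rho$ with the limit of the local densities by testing against $v$-independent functions.

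First, the uniform bound \eqref{eq:soln-VFP:uniform-apriori} immediately gives that $(f^\eps)$ is bounded in $\mathrm L^\infty\bigl(0,T;\mathrm L^2(\Omega\times\R^d,\mathcal{M}^{-1}\mathrm dx\mathrm dv)\bigr)$. By Banach--Alaoglu, a subsequence converges weakly-$\ast$ to some $g(t,x,v)$ in this space, and in particular weakly in $\mathrm L^2\bigl(0,T;\mathrm L^2(\Omega\times\R^d,\mathcal{M}^{-1}\mathrm dx\mathrm dv)\bigr)$. For the local densities, Cauchy--Schwarz gives the pointwise bound
\begin{equation*}
|\rho^\eps(t,x)|^2 = \Bigl|\int_{\R^d} f^\eps \mathcal{M}^{-1/2}\cdot\mathcal{M}^{1/2}\,\mathrm dv\Bigr|^2 \leq \int_{\R^d}\frac{|f^\eps|^2}{\mathcal{M}}\,\mathrm dv,
\end{equation*}
so $(\rho^\eps)$ is bounded in $\mathrm L^\infty(0,T;\mathrm L^2(\Omega))$ and some subsequence converges weakly-$\ast$ to a function $\rho \in \mathrm L^\infty(0,T;\mathrm L^2(\Omega))$.

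Next, I rewrite the dissipation in the useful $\mathcal{M}$-weighted form. Since $\mathcal{L}f = \nabla_v\cdot\bigl(\mathcal{M}\nabla_v(f/\mathcal{M})\bigr)$, an integration by parts (rigorous via the existence framework of \cite{Mellet_2007}) yields
\begin{equation*}
\mathcal{D}(f^\eps)(t) = 2\iint\limits_{\Omega\times\R^d}\Bigl|\nabla_v\Bigl(\frac{f^\eps}{\mathcal{M}}\Bigr)\Bigr|^2\mathcal{M}(v)\,\mathrm dx\,\mathrm dv.
\end{equation*}
The a priori estimate \eqref{eq:soln-VFP:uniform-apriori} then reads $\int_0^T\mathcal{D}(f^\eps)\,\mathrm dt \leq C\eps^2$, so $\mathcal{M}^{1/2}\nabla_v(f^\eps/\mathcal{M}) \to 0$ strongly in $\mathrm L^2((0,T)\times\Omega\times\R^d)$. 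Pass to the distributional limit: for any test function $\psi\in C_c^\infty((0,T)\times\Omega\times\R^d)$,
\begin{equation*}
\iiint f^\eps \nabla_v\bigl(\mathcal{M}^{-1}\psi\bigr)\mathrm dv\,\mathrm dx\,\mathrm dt = -\iiint \mathcal{M}\nabla_v\bigl(f^\eps/\mathcal{M}\bigr)\cdot\mathcal{M}^{-1}\psi\,\mathrm dv\,\mathrm dx\,\mathrm dt \longrightarrow 0,
\end{equation*}
and using the weak convergence of $f^\eps$ to $g$ on the left, I deduce $\nabla_v(g/\mathcal{M})=0$ in $\mathcal{D}'$. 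Hence $g(t,x,v) = \widetilde\rho(t,x)\mathcal{M}(v)$ for some $\widetilde\rho\in\mathrm L^\infty(0,T;\mathrm L^2(\Omega))$.

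Finally, to identify $\widetilde\rho$ with $\rho$, I test the weak convergence of $f^\eps$ against functions of the form $\phi(t,x)$ (independent of $v$ and compactly supported), which belong to the dual weighted space because $\int\mathcal{M}\,\mathrm dv = 1 <\infty$:
\begin{equation*}
\int_0^T\!\int_\Omega\rho^\eps(t,x)\phi(t,x)\,\mathrm dx\,\mathrm dt = \iiint f^\eps\phi\,\mathrm dv\,\mathrm dx\,\mathrm dt \longrightarrow \iiint\widetilde\rho\,\mathcal{M}\,\phi\,\mathrm dv\,\mathrm dx\,\mathrm dt = \int_0^T\!\int_\Omega\widetilde\rho\,\phi\,\mathrm dx\,\mathrm dt,
\end{equation*}
so $\widetilde\rho = \rho$. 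The main technical point is the rigorous justification of the integration by parts yielding the $\mathrm H^1_v$-type dissipation identity — which however is standard once one knows that the solutions constructed in \cite{Mellet_2007} enjoy $f^\eps/\mathcal{M}\in\mathrm L^2(0,T;\mathrm H^1_v(\Omega\times\R^d,\mathcal{M}\mathrm dx\mathrm dv))$; the rest is soft weak-compactness and identification of the kernel of $\mathcal{L}$.
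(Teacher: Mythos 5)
Your argument is correct in outline and does prove the proposition, but it identifies the limit by a genuinely different route than the paper. The paper never characterises the kernel of $\mathcal{L}$ distributionally; instead it rewrites the dissipation as $\iint \left|\nabla_v\left((f^\eps-\rho^\eps\mathcal{M})/\mathcal{M}\right)\right|^2\mathcal{M}\,{\rm d}v\,{\rm d}x$ (legitimate since $\rho^\eps$ is constant in $v$) and invokes the Poincar\'e inequality for the Gaussian measure, which applies because $(f^\eps-\rho^\eps\mathcal{M})/\mathcal{M}$ has zero mean against $\mathcal{M}\,{\rm d}v$. This gives $\mathcal{D}(f^\eps)\geq\theta\iint|f^\eps-\rho^\eps\mathcal{M}|^2\mathcal{M}^{-1}\,{\rm d}v\,{\rm d}x$ and hence the \emph{strong} convergence $f^\eps-\rho^\eps\mathcal{M}\to0$ in $\mathrm L^2(0,T;\mathrm L^2(\mathcal{M}^{-1}{\rm d}x\,{\rm d}v))$, from which the weak limit of $f^\eps$ is immediately $\rho\mathcal{M}$. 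Your softer argument (vanishing dissipation forces $\nabla_v(g/\mathcal{M})=0$ in $\mathcal{D}'$, then matching $\widetilde\rho$ with $\rho$ by testing against $v$-independent functions) yields exactly the weak convergence claimed in the statement; what the Poincar\'e route buys, at no extra cost, is the quantitative strong convergence of $f^\eps-\rho^\eps\mathcal{M}$, a strictly stronger output.

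One display in your write-up is wrong as stated. Since $\nabla_v(\mathcal{M}^{-1}\psi)=\mathcal{M}^{-1}(\nabla_v\psi+v\psi)$, the correct identity is
\begin{equation*}
\iiint f^\eps\,\nabla_v\bigl(\mathcal{M}^{-1}\psi\bigr)\,{\rm d}v\,{\rm d}x\,{\rm d}t
=-\iiint\nabla_v\Bigl(\frac{f^\eps}{\mathcal{M}}\Bigr)\,\psi\,{\rm d}v\,{\rm d}x\,{\rm d}t
+\iiint\frac{f^\eps}{\mathcal{M}}\,v\,\psi\,{\rm d}v\,{\rm d}x\,{\rm d}t,
\end{equation*}
so your version is missing the drift term and the right-hand side does not tend to zero as claimed. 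The repair is standard: test $\nabla_v(g/\mathcal{M})$ directly against $\chi\in C_c^\infty$, using $\int(f^\eps/\mathcal{M})\nabla_v\chi=-\int\nabla_v(f^\eps/\mathcal{M})\chi\to0$ by your strong convergence of $\mathcal{M}^{1/2}\nabla_v(f^\eps/\mathcal{M})$, while the left side converges to $\int(g/\mathcal{M})\nabla_v\chi$ because $\mathcal{M}\nabla_v\chi\in\mathrm L^2(\mathcal{M}^{-1}{\rm d}v)$ for compactly supported $\chi$. With this correction the rest of your proof goes through.
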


\begin{proof}
The proof relies on the properties of the dissipation \eqref{eq:soln-VFP:dissipation} in the estimate \eqref{eq:soln-VFP:uniform-apriori}. Remark that the Fokker-Planck operator can be rewritten as
\begin{align*}
\mathcal{L}f^\eps 
= 
\nabla_v \cdot
\left(
\mathcal{M}(v)
\nabla_v
\left(
\frac{f^\eps}{\mathcal{M}(v)}
\right)
\right).
\end{align*}
This helps us deduce that the dissipation $\mathcal{D}$ is positive semi-definite, i.e.,
\begin{align*}
\mathcal{D}(f^\eps) 
= 
- \iint\limits_{\Omega\times\R^d}
\frac{f^\eps}{\mathcal{M}(v)}
\mathcal{L}f^\eps 
\, {\rm d}v\, {\rm d}x
= 
\iint\limits_{\Omega\times\R^d}
\left|\nabla_v \left( \frac{f^\eps}{\mathcal{M}(v)} \right) \right|^2 \mathcal{M}(v)
\, {\rm d}v\, {\rm d}x
\geq 
0.
\end{align*}
The non-negativity of $\mathcal{D}$ in \eqref{eq:soln-VFP:uniform-apriori} yields the following uniform (with respect to $\eps$) bound.
\begin{align}\label{eq:soln-VFP:uniform-bound-L-infty-L2}
\|f^\eps\|_{\mathrm L^\infty(0,T;\mathrm L^2(\Omega\times\R^d,\mathcal{M}^{-1}(v){\rm d}x{\rm d}v))}
\le C.
\end{align}
Hence, we can extract a sub-sequence and there exists a limit $\overline{f}(t,x,v)$ such that
\begin{align*}
f^\eps \rightharpoonup \overline{f}
\quad \mbox{ in } \mathrm L^\infty\left( 0,T; \mathrm L^2\left(\Omega\times\R^d, \mathcal{M}^{-1}(v){\rm d}x{\rm d}v\right) \right)\, \mbox{ weak-*}
\end{align*}
as $\eps\to0$. Moreover, using the Cauchy-Schwarz inequality, we have
\begin{equation*}
|\rho^\eps(t,x)| 
= \left|
\int\limits_{\R^d}
\frac{f^\eps}{\mathcal{M}^{1/2}}\mathcal{M}^{1/2}
\, {\rm d}v
\right|
\le 
\left(
\int\limits_{\R^d}
|f^\eps|^2
\frac{{\rm d}v}{\mathcal{M}(v)}
\right)^{\frac{1}{2}} 
\left(
\int\limits_{\R^d}
\mathcal{M}(v)
\, {\rm d}v 
\right)^{\frac{1}{2}}.
\end{equation*}
Since $\mathcal{M}(v)$ is normalized \eqref{eq:main:maxwellian-normalization}, integrating the above inequality in the spatial variable and taking supremum over the time interval $[0,T]$ yields the following estimate
\begin{align}\label{eq:soln-VFP:uniform-bound-rho-L-infty-L2}
\|\rho^\eps\|_{\mathrm L^\infty(0,T;\mathrm L^2(\Omega))}
\le C,
\end{align}
where we have used the estimate \eqref{eq:soln-VFP:uniform-bound-L-infty-L2}. Again, we can extract a sub-sequence and there exists a limit $\rho(t,x)$ such that
\begin{align*}
\rho^\eps \rightharpoonup \rho
\quad \mbox{ in }\mathrm L^\infty\left( 0,T;\mathrm L^2(\Omega) \right)\, \mbox{ weak-*}.
\end{align*}
Remark that the dissipation can be successively written as
\begin{align*}
\mathcal{D}(f^\eps) 
= 
\iint\limits_{\Omega\times\R^d}
\left|
\nabla_v
\left(
\frac{f^\eps}{\mathcal{M}(v)}
\right)
\right|^2
\mathcal{M}(v)
\, {\rm d}v\, {\rm d}x
=
\iint\limits_{\Omega\times\R^d}
\left|
\nabla_v 
\left( 
\frac{f^\eps-\rho^\eps \mathcal{M}(v)}{\mathcal{M}(v)}
\right)
\right|^2
\mathcal{M}(v)
\, {\rm d}v\, {\rm d}x.
\end{align*}
Using Poincar\'e inequality for the Gaussian measure in the velocity variable yields the existence of a constant $\theta>0$ such that
\begin{align*}
\mathcal{D}(f^\eps)
\geq 
\theta 
\iint\limits_{\Omega\times\R^d}
\left|
\frac{f^\eps-\rho^\eps \mathcal{M}(v)}{\mathcal{M}(v)}
\right|^2 
\mathcal{M}(v)
\, {\rm d}v\, {\rm d}x
=
\theta
\iint\limits_{\Omega\times\R^d}
\left|
f^\eps - \rho^\eps \mathcal{M}(v) 
\right|^2 
\frac{{\rm d}v{\rm d}x}{\mathcal{M}(v)}.
\end{align*}
Since \eqref{eq:soln-VFP:uniform-apriori} implies that the dissipation tends to zero as $\eps$ tends to zero, we have
\begin{align*}
f^\eps-\rho^\eps \mathcal{M}(v) \to 0
\quad
\mbox{ strongly in }
\mathrm L^2(0,T;\mathrm L^2(\Omega\times\R^d,\mathcal{M}^{-1}(v){\rm d}x{\rm d}v)).
\end{align*}
This concludes the proof.
\end{proof}

\section{Auxiliary problem}\label{sec:auxiliary-problem}

The auxiliary problem that we consider is inspired by the hyperbolic structure of the Vlasov-Fokker-Planck equation in Fourier space. Indeed, if we consider \eqref{eq:main:vfp-eps} in the whole space and apply Fourier transform in $x$ and $v$ variables (with respective Fourier variables $p$ and $q$), we have
\begin{align*}
\eps\partial_t \widehat{f^\eps}
+ 
\left(
p - \frac{1}{\eps} q
\right)
\cdot
\nabla_q
\widehat{f^\eps}
=
\frac{1}{\eps}
|q|^2
\widehat{f^\eps}
\end{align*}
which is a hyperbolic equation, its characteristic lines given by $ ( p- \eps^{-1} q)\cdot \nabla_q$. The motivation behind the auxiliary problem is to choose a test function which will be constant along those lines (translated in an adequate way to the non-Fourier space) and satisfy the specular reflection condition \eqref{eq:VFP:specular-reflection}. This auxiliary problem was first introduced in \cite{Cesbron_2012} in the whole space and then improved in \cite{Cesbron_2016} to handle bounded domains and in particular specular reflection boundary conditions. For the sake of completeness, let us present the construction of a solution to this problem in a strongly convex domain with specular reflections on the boundary.

\subsection{Geodesic Billiards and Specular cycles}\label{subsec:billiards}

For any $\psi \in \mathcal{C}^\infty (\overline{\Omega})$ we construct $\varphi(x,v)$ through the following boundary value problem.
\begin{equation}\label{eq:Aux:auxiliary-problem}
\left\{
\begin{aligned}
&v \cdot \nabla_x \varphi 
- v \cdot \nabla_v \varphi
 = 0 
& \mbox{ in }\Omega\times\R^d,
\\[0.2 cm]
&\gamma_+\varphi(x,v) 
 = \gamma_-\varphi(x,\mathcal{R}_x(v)) 
& \mbox{ on }\Sigma_+,
\\[0.2 cm]
&\varphi(x,0)
 = \psi(x) 
& \mbox{ in } \Omega,
\end{aligned}\right.
\end{equation}
where we impose the initial condition on the hypersurface $\{v=0\}$. Note that $\phi^\eps(x,v) = \varphi(x,\eps v)$ will be a solution to the following auxiliary problem.
\begin{equation}\label{eq:Aux:auxiliary-problem-eps}
\left\{
\begin{aligned}
&\eps v \cdot \nabla_x \phi^\eps
- v \cdot \nabla_v \phi^\eps 
 = 0 
& \mbox{ in }\Omega\times\R^d,
\\[0.2 cm]
&\gamma_+\phi^\eps (x,v)
 = \gamma_-\phi^\eps(x,\mathcal{R}_x(v)) 
& \mbox{ on }\Sigma_+,
\\[0.2 cm]
&\phi^\eps(x,0)
 = \psi(x) 
& \mbox{ in } \Omega.
\end{aligned}\right.
\end{equation}
The characteristic curves associated with the boundary value problem \eqref{eq:Aux:auxiliary-problem} solve the following system of ordinary differential equations.
\begin{equation}\label{eq:cc}
  \left\{
      \begin{array}{llll}
        &\dot{x}(s) = v(s) \hspace{15mm} &x(0)=x_0,  \\[0.2 cm]
        &\dot{v}(s) = -v(s) &v(0)=v_0,\\[0.2 cm]
        & \text{If } x(s)\in\partial\Omega \text{ then } v(s^+)= \mathcal{R}_{x(s)}(v(s^{-})).
      \end{array}
    \right.
\end{equation}
We denote by $\Psi_{x_0,v_0}(s) = (x(s), v(s))$ to be the flow associated with \eqref{eq:cc} in the phase space $\Omega\times\R^d$ starting at $(x_0,v_0)$. Suppose the base point of the flow is an arbitrary $(x_0,v_0)\in\Omega\times\R^d$. With the convention $s_0=0$, consider the sequence $\{s_i\}_{i\ge0}\subset [0,\infty)$ of forward exit times defined as
\begin{align}\label{eq:Aux:exit-times}
s_{i+1}(x_0,v_0)
:= 
\inf 
\Big\{
\ell \in [s_i,\infty) \mbox{ s.t. } x(s_i) + (\ell - s_i) v(s_i) \notin \Omega 
\Big\}.
\end{align}
Solving \eqref{eq:cc} for the velocity component of the flow, we get
\begin{equation}
  \left\{
      \begin{aligned}
        &v(s) = e^{-s} v_0 &\text{ for } s\in[0,s_1),\\
		&v(s_i^+) = \mathcal{R}_{x(s_i)} v(s_i^-),\\        
        &v(s) = e^{-(s-s_i)} v(s_i^+)  &\text{ for } s\in(s_i,s_{i+1}),
      \end{aligned}
    \right.
\end{equation}
which gives the particle trajectory, for $s\in(s_i,s_{i+1})$,
\begin{align*}
x(s) &= x_0 +  \int\limits_0^s v(\tau) \text{d}\tau
	    = x_0 + \underset{k=0}{\overset{i-1}{\sum}} \int\limits_{s_{k}}^{s_{k+1}} v(\tau) \text{d}\tau +  \int\limits_{s_{i}}^{s} v(\tau) \text{d}\tau  \\
	 &= x_0 + \underset{k=0}{\overset{i-1}{\sum}} \left(1-e^{-(s_{k+1}-s_k)}\right) v(s_k^+) +  \left(1-e^{-(s-s_k)}\right)v(s_i^+).
\end{align*}
Instead of considering an exponentially decreasing velocity $v(s)$ on an infinite interval $[0,\infty)$, we would like to consider particle trajectories with constant speed on a finite interval $[0,1)$. To that end, we notice that the reflection operator $\mathcal{R}$ is isometric, which means
\begin{align*}
v(s_i^+) &= \mathcal{R}_{x(s_i)}\big( v(s_i^-)\big)\\
		  &= \mathcal{R}_{x(s_i)} \big( e^{-(s_i-s_{i-1})} v(s_{i-1}^+) \big)\\
		  &= e^{-(s_i-s_{i-1})} \mathcal{R}_{x(s_i)} \circ \mathcal{R}_{x(s_{i-1})} \big( e^{-(s_{i-1}-s_{s-2})} v(s_{i-2}^+) \big)\\
		  &= e^{-(s_i-s_{i-2})} \mathcal{R}_{x(s_i)} \circ \mathcal{R}_{x(s_{i-1})} \circ \mathcal{R}_{x(s_{i-2})} \big( e^{-(s_{i-2}-s_{s-3})} v(s_{i-3}^+)\big) \\
		  &= e^{-(s_i-0)} \mathcal{R}_{x(s_i)} \circ \mathcal{R}_{x(s_{i-1})} \circ \dots \circ \mathcal{R}_{x(s_1)} \big( v_0 \big).
\end{align*}
We define the operator $R^i$ as
\begin{equation} \label{def:Ri}
  \left\{
      \begin{aligned}
        & R^0 = Id,\\
        & R^i= \mathcal{R}_{x(s_i)} \circ R^{i-1},
      \end{aligned}
    \right.
\end{equation}
and a new velocity $w(s) := e^s v(s)$ which then satisfies
\begin{equation}
  \left\{
      \begin{aligned}
        &w(s) = v_0 &\text{ for } s\in(0,s_1),\\
		& w(s_i) = R^i v_0,\\        
        &w(s) = R^i w(s_i)  &\text{ for } s\in[s_i,s_{i+1}).
      \end{aligned}
    \right.
\end{equation}
It is easy to check that for any $s$, $|w(s)| = |v_0|$. The trajectory $x(s)$ can be written, with the new velocity variable $w$ as
\begin{align*}
x(s) &= x_0 + \int\limits_0^s e^{-\tau} w(\tau) \text{d}\tau\\
	 &= x_0 + \underset{k=0}{\overset{i-1}{\sum}} \left(e^{-s_k}-e^{-s_{k+1}}\right) w(s_k) + \left(e^{-s}-e^{-s_i}\right)w(s_i)
\end{align*}
and finally, we introduce a new parametrisation $\tau = 1 - e^{-s} \in [0,1)$ and the corresponding reflection times $\tau_i = 1-e^{-s_i}$ with which we have, for any $\tau \in [\tau_i,\tau_{i+1})$,
\begin{equation} \label{eq:etaexplicit}
  \left\{
      \begin{aligned}
        & x(\tau) = x_0 + \underset{k=0}{\overset{i-1}{\sum}} \left(\tau_{k+1}-\tau_{k}\right) w(\tau_k) + \left(\tau-\tau_i\right)w(\tau_i),\\
        & w(\tau) = w(\tau_i) = R^i w_0.
      \end{aligned}
    \right.
\end{equation}
We notice that the particle trajectory $x(\tau)$ together with the velocity profile $w(\tau)$ in \eqref{eq:etaexplicit} can be seen as the specular cycle associated with our Hamiltonian dynamics. Next, we record a couple of simple observations on the forward exit times $s_i$ and the grazing set $\Sigma_0$ associated with the specular cycle \eqref{eq:etaexplicit}.
\begin{Lem}\label{lem:Aux:exit-times-grazing-set}
Let $\Omega\subset\R^d$ be strictly convex. Then, we have
\begin{enumerate}
\item[(i)] For any $(x,v)\in\Omega\times\R^d$, the trajectory never passes through a grazing set $\Sigma_0$.
\item[(ii)] For any $(x,v)\in\Omega\times\R^d$, there exists a $N\in\mathbb{N}^*$ depending on $(x,v)$ such that the forward exit time $s_{N+1}(x,v)$ does not exist.
\end{enumerate}
\end{Lem}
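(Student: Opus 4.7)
My plan is to prove (i) and (ii) separately, both leaning on the strong convexity hypothesis \eqref{eq:VFP:strong-convex}; the degenerate case $v_0=0$ is trivial since then the whole trajectory stays at $x_0\in\Omega$, so I treat $v_0\neq 0$ throughout. The main tool is the scalar function along each chord
\begin{equation*}
g_i(\tau):=\zeta\bigl(x(\tau_i)+(\tau-\tau_i)w(\tau_i)\bigr),\qquad \tau\in[\tau_i,\tau_{i+1}],
\end{equation*}
which, since $w(\tau_i)$ is constant between reflections and $|w(\tau_i)|=|v_0|$, satisfies
\begin{equation*}
g_i''(\tau)=w(\tau_i)^{\top}\frac{\partial^2\zeta}{\partial x^2}\bigl(x(\tau)\bigr)w(\tau_i)\ \ge\ C_\zeta|v_0|^2>0
\end{equation*}
by \eqref{eq:VFP:strong-convex}. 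Thus $g_i$ is strongly convex on each chord interval.

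For (i), I argue one chord at a time. Since $g_i(\tau_i)=g_i(\tau_{i+1})=0$ while $g_i<0$ strictly on the open interval (the trajectory lives inside $\Omega$), strong convexity forces the one-sided derivatives at the endpoints to be strictly nonzero: $g_i'(\tau_i)<0$ and $g_i'(\tau_{i+1})>0$. Unpacking $g_i'(\tau_{i+1})=|\nabla\zeta(x(\tau_{i+1}))|\,w(\tau_i)\cdot n(x(\tau_{i+1}))$ and using the standing assumption that $\nabla\zeta\neq0$ near $\partial\Omega$, this is exactly the transversality $w(\tau_i)\cdot n(x(\tau_{i+1}))>0$, so the trajectory hits the boundary with a non-zero normal component, i.e.\ it avoids $\Sigma_0$. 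The very first impact at $\tau_1$ is handled identically, starting from $g_0(0)=\zeta(x_0)<0$ and the first zero $\tau_1\in(0,1)$.

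For (ii), I argue by contradiction: suppose every $s_i$, hence every $\tau_i=1-e^{-s_i}\in[0,1)$, exists. The sequence $\{\tau_i\}$ is strictly increasing in $[0,1)$ and converges to some $\tau_\infty\in(0,1]$, and the total chord length
\begin{equation*}
\sum_{i\ge0}\ell_i\ =\ |v_0|\sum_{i\ge0}(\tau_{i+1}-\tau_i)\ =\ |v_0|\,\tau_\infty\ \le\ |v_0|
\end{equation*}
is finite, so $\ell_i\to 0$. A mean-value Taylor expansion of $\zeta$ along the $i$-th chord, using the two-sided Hessian bound $C_\zeta I\le\partial^2\zeta/\partial x^2\le KI$ on $\overline\Omega$ together with $|\nabla\zeta|\in[m_N,M_N]$ in a neighbourhood of $\partial\Omega$, gives the comparability
\begin{equation*}
\ell_i\ \asymp_\Omega\ \sin\alpha_i,\qquad \sin\alpha_i:=\frac{|w(\tau_i)\cdot n(x(\tau_i))|}{|v_0|},
\end{equation*}
so $\sin\alpha_i\to0$ as well. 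A second Taylor argument, this time applied to $\hat u_i\cdot\bigl(\nabla\zeta(x(\tau_{i+1}))-\nabla\zeta(x(\tau_i))\bigr)$ (where $\hat u_i=w(\tau_i)/|v_0|$), produces the estimate $\sin\alpha_{i+1}+\sin\alpha_i\ge c(\Omega)\,\ell_i$. Combining these two inequalities yields a recursion of the form $\sin\alpha_{i+1}\ge\lambda\sin\alpha_i$ with a geometric constant $\lambda\ge 1$ (in the explicit case of a ball, $\lambda=1$ simply reflects conservation of the angle of incidence under the radial billiard map; a Frenet/Weingarten-type computation pushes this through in general). This lower bound contradicts $\sin\alpha_i\to 0$, and therefore only finitely many $s_i$ can exist.

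The step I expect to be the main obstacle is sharpening the angle recursion in (ii). The crude two-sided Hessian bounds only give $\lambda\ge 2C_\zeta m_N/(KM_N)$, which need not exceed $1$ for arbitrary defining functions $\zeta$ of a strongly convex $\Omega$. The fix is to refine the Taylor expansion of $n(x)$ along each chord using the Weingarten map of $\partial\Omega$ and obtain $\sin\alpha_{i+1}=\sin\alpha_i+O(\sin^2\alpha_i)$ in the small-chord regime, which is enough to rule out accumulation. This is exactly where the \emph{strong} (as opposed to merely strict) convexity hypothesis \eqref{eq:VFP:strong-convex} is used in an essential way, and the required regularity of the billiard flow near non-grazing collisions will be supplied by the appendix.
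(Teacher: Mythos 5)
First, a point of comparison: the paper does not prove this lemma at all --- it is quoted from Safarov--Vassiliev (Lemma~1.3.17 of their book), so you are attempting something the authors deliberately outsourced. Your proof of part (i) is complete and correct, and is the standard argument: along each chord the function $\tau\mapsto\zeta\bigl(x(\tau_i)+(\tau-\tau_i)w(\tau_i)\bigr)$ has second derivative at least $C_\zeta|v_0|^2>0$ by \eqref{eq:VFP:strong-convex}, vanishes at both endpoints and is negative in between, so its one-sided derivatives at the endpoints cannot vanish; this is exactly transversality of every impact, hence avoidance of $\Sigma_0$.

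Part (ii), however, has a genuine gap, and it sits exactly where you flagged it. The reductions $\sum_i\ell_i\le|v_0|\Rightarrow\ell_i\to0$ and $\ell_i\asymp\sin\alpha_i$ (hence $\sin\alpha_i\to0$) are fine, but the recursion $\sin\alpha_{i+1}\ge\lambda\sin\alpha_i$ with $\lambda\ge1$ is never established: as you yourself note, the two-sided Hessian bounds only give a constant that can be strictly less than $1$, and with $\lambda<1$ there is no contradiction with $\sin\alpha_i\to0$. The proposed repair, $\sin\alpha_{i+1}=\sin\alpha_i+O(\sin^2\alpha_i)$, is indeed the right statement (it is the near-grazing expansion of the billiard map), but it is only asserted, and it cannot be obtained from Hessian bounds alone: it genuinely uses third-order information on $\partial\Omega$ (bounds on the derivative of the curvature). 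This is essentially Halpern's theorem, and there are convex tables of lower boundary regularity on which a trajectory does accumulate infinitely many reflections in finite time, so this estimate is the entire nontrivial content of the cited Safarov--Vassiliev lemma. Moreover, even granting it, your stated conclusion is too quick: $\sin\alpha_{i+1}\ge\sin\alpha_i(1-C\sin\alpha_i)$ is perfectly compatible with $\sin\alpha_i\to0$. The correct finish is to pass to reciprocals, $1/\sin\alpha_{i+1}\le 1/\sin\alpha_i+2C$ for $i$ large, whence $\sin\alpha_i\ge c/i$, hence $\ell_i\ge c'/i$ and $\sum_i\ell_i=\infty$, contradicting $\sum_i\ell_i\le|v_0|$. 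So the skeleton of (ii) is the standard and correct one, but both the key angle estimate and the concluding summation argument still need to be supplied; as written the proof of (ii) is incomplete.
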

The above result is proved in \cite[Chapter 1, Section 1.3, Lemma 1.3.17]{Safarov_1996}, an excellent book of Safarov and Vassiliev, where geodesic billiards on manifolds are extensively studied.

\subsection{Solution to the auxiliary problem and rescaling}\label{ssec:soln-Aux}
Next, we shall define a function on the phase space.
\begin{Defi}[End-point function]\label{def:Aux:end-point-function}
The end-point function $\eta:\big(\bar\Omega\times\R^d\big)\setminus\Sigma_0\to\bar\Omega$ is defined such that for every $(x_0,v_0)\in\bar\Omega\times\R^d\setminus\Sigma_0$, 
\begin{align*}
\eta(x_0,v_0) = x(\tau=1),
\end{align*}
where the particle trajectory is given in \eqref{eq:etaexplicit}.
\end{Defi}
Using the end-point function $\eta(x,v)$, we have a solution to the auxiliary problem \eqref{eq:Aux:auxiliary-problem} for any 
\begin{align}\label{eq:Aux:mathfrak-D}
\psi \in \mathfrak{D}:= \left\{ \psi\in C^\infty(\overline{\Omega})\mbox{ such that }\nabla\psi\cdot n(x) = 0 \mbox{ for }x\in \partial\Omega\right\},
\end{align}
which can be explicitly written as
\begin{align*}
\varphi(x,v) = \psi \left( \eta(x,v) \right).
\end{align*}
Hence we deduce a solution to the auxiliary problem \eqref{eq:Aux:auxiliary-problem-eps} for any $\psi\in\mathfrak{D}$ and for any $\eps>0$,
\begin{align}\label{eq:Aux:solution-phi-eps}
\phi^\eps (x,v) = \psi\left(\eta(x,\eps v)\right).
\end{align}
Indeed, the end-point function ensures not only that $\phi^\eps$ is constant along the specular cycles, which in turns implies that the first two equations of \eqref{eq:Aux:auxiliary-problem-eps} are satisfied, but also that $\phi^\eps(x,0)=\psi\big(\eta(x,0)\big) = \psi(x)$.\\
For $\phi^\eps$ to be a test function in the weak formulation \eqref{eq:soln-VFP:weak-soln-definition} of Vlasov-Fokker-Planck equation, we need to add a dependency in time. Hence taking $\psi(t,x)\in\mathfrak{D}$ for all $t\in[0,T]$, we have
\begin{align*}
\phi^\eps(t,x,v)= \psi(t,\eta(x,\eps v)).
\end{align*}
Finally, to conclude this section about the auxiliary problem, let us determine the limit of the family $\phi^\eps (t,x,v)$ as $\eps$ goes to $0$. By the definition of $\eta(x,v)$, for any $(x,v)\in \Omega\times\R^d$, there exists $\eps$ small enough, namely $\eps < \text{dist}(x,\partial\Omega)/ \vert v\vert$, such that $\eta(x,\eps v)=x+\eps v$. Therefore 
\begin{align*}
\underset{\eps \rightarrow 0}{\lim}\, \psi \left( t, \eta(x,\eps v) \right) 
= \underset{\eps \rightarrow 0}{\lim}\, \psi \left( t, x+\eps v \right)
= \psi(t,x) \quad \forall (t,x,v)\in[0,T]\times\Omega\times\R^d.
\end{align*}

\section{Derivation of the macroscopic model}\label{sec:macro}

We now return to the proof of Theorem \ref{thm:main:statement}. Consider $f^\eps(t,x,v)$ a weak solution of \eqref{eq:main:vfp-eps}-\eqref{eq:main:vfpin-eps}-\eqref{eq:main:vfp-specular-eps} in the sense of Definition \ref{def:soln-VFP:weak-solution}. For any $\phi^\eps$ satisfying \eqref{eq:soln-VFP:weak-soln-test-function} we have
\begin{equation}\label{eq:macro:weak-form}
\begin{aligned}
\iiint\limits_{(0,T)\times\Omega\times\R^d}
f^\eps(t,x,v) 
\Big( \eps^2 \partial_t \phi^\eps + \eps v \cdot \nabla_x \phi^\eps & - v \cdot \nabla_v \phi^\eps + \Delta_v \phi^\eps \Big)
\, {\rm d}v\, {\rm d}x\, {\rm d}t
\\
& + \eps^2 \iint\limits_{\Omega \times\R^d}
f^{in}(x,v) \phi^\eps(0,x,v)
\, {\rm d}v\, {\rm d}x
= 0.
\end{aligned}
\end{equation}
In particular, for $\phi^\eps(t,x,v) = \psi\left( t,\eta(x,\eps v) \right)$, where $\psi(t,x)\in\mathfrak{D}\, \forall t\in[0,T]$, we have
\begin{align*}
  \left\{
      \begin{aligned}
        & \eps v \cdot \nabla_x \left[ \psi \left( t,\eta(x,\eps v) \right) \right] - v \cdot \nabla_v \left[ \psi \left( t,\eta(x,\eps v) \right) \right] = 0\\[0.2 cm]
        & \Delta_v \left[ \psi \left( t,\eta(x,\eps v) \right) \right] = \eps^2 \Delta_v \left[ \psi \left( t,\eta(x,\cdot) \right) \right] (\eps v).
      \end{aligned}
    \right.
\end{align*}
Hence, \eqref{eq:macro:weak-form} becomes
\begin{align}\label{eq:macro:weak-formulation-2}
\iiint\limits_{(0,T)\times\Omega\times\R^d}
f^\eps \left( \partial_t \psi + \Delta_v \left[ \psi \left( t,\eta(x,\cdot) \right) \right] (\eps v) \right)
\, {\rm d}v\, {\rm d}x\, {\rm d}t
+ \iint\limits_{\Omega\times\R^d} 
f^{in}(x,v) \psi \left( 0,\eta(x,\eps v) \right) 
\, {\rm d}v\, {\rm d}x 
= 0.
\end{align}
Since $f^\eps$ converges weakly* in $\mathrm L^\infty(0,T;\mathrm L^2(\mathcal{M}^{-1}(v){\rm d}x{\rm d}v))$ (Proposition \ref{prop:soln-VFP:weak-limit}), in order to take the limit as $\eps$ goes to $0$ we need to show that $\Delta_v \left[ \psi \left( t,\eta(x,\cdot) \right) \right] (\eps v)$ converges strongly in $\mathrm L^2(\mathcal{M}(v){\rm d}x{\rm d}v)$. To that end, we write
\begin{align}
\Delta_v & \left[ \psi \left( t,\eta(x,\cdot) \right) \right] (\eps v) 
= \nabla_v \cdot \nabla_v \left( \psi \left( t,\eta(x,\cdot) \right) \right) (\eps v )
\nonumber \\
&
= \sum_{i=1}^d\sum_{k=1}^d
\frac{\partial^2 \eta_k}{\partial v_i^2} (x,\eps v) \frac{\partial\psi}{\partial\eta_k} (t,\eta(x,\eps v))
\nonumber \\
& \quad
+ \eps^2 \sum_{i=1}^d\sum_{k=1}^d\sum_{l=1}^d 
\frac{\partial \eta_k}{\partial v_i}(x,\eps v)\frac{\partial^2\psi}{\partial\eta_k\partial\eta_l}(t,\eta(x,\eps v))\frac{\partial \eta_l}{\partial v_i}(x,\eps v)
\nonumber \\
&
= \Delta_v \eta(x,\eps v) \cdot \nabla_x \psi \left( t,\eta(x,\eps v) \right) + \Tr \left( \nabla_v\eta(x,\eps v) {}^\top\!\nabla_v\eta(x,\eps v) H_x\psi \left( t,\eta(x,\eps v) \right) \right), \label{eq:devDeltapsieta}
\end{align}
where $H_x\psi$ denotes the Hessian matrix of $\psi$. For any $(x,v)\in\Omega\times\R^d$ we know that for $\eps$ small enough, i.e. $\eps < \text{dist}(x,\partial\Omega)/\vert v\vert$, $\eta(x,\eps v) = x+\eps v$, which means $\nabla_v \eta(x,\eps v) = {\rm Id}$ and $\Delta_v \eta(x,\eps v)= 0$ so that, using the computation above, for such $\eps$ we have
\begin{align*}
\Delta_v \left[ \psi \left( t,\eta(x,\cdot) \right)\right] (\eps v) 
= \Tr \left(H_x\psi \left(t,x+\eps v\right)\right) 
= \Delta_x \psi \left(t,x + \eps v\right).
\end{align*}
Since $\psi$ is smooth, this yields a point-wise convergence
\begin{align}\label{eq:macro:pointwise}
\Delta_v \left[ \psi \left( t,\eta(x,\cdot) \right) \right] (\eps v)  \rightarrow \Delta_x\psi(t,x) \quad \mbox{ a.e. on } [0,T]\times\Omega.
\end{align}
This convergence holds up to the boundary, indeed for any $x\in \partial\Omega$ and $v\in\mathbb{R}^d$ we see, by the definition of the end-point function, that for some $\eps$ small enough
\begin{equation*} 
\nabla_v \eta (x, \eps v) = \left\{ \begin{aligned}  &{\rm Id}  &\quad\mbox{ if } v\cdot n(x) <0 \\
																			  &{\rm Id} - 2 n(x) \otimes n(x) &\quad\mbox{ if } v\cdot n(x) >0 \end{aligned} \right.
\end{equation*}
which yields, in turn, that $\Delta \eta (x,\eps v) = 2 n(x)\delta_{v\cdot n(x)=0}$. Hence, for any $\psi \in \mathfrak{D}$  and $(x,v)\in \partial \Omega\times \mathbb{R}^d$ we have
\begin{align*}
\Delta_v \left[ \psi \left( t,\eta(x,\cdot) \right) \right] (\eps v) \rightarrow 2 \nabla \psi(x)\cdot n(x) \delta_{v\cdot n(x)=0} + \Delta \psi(x) = \Delta\psi(x).
\end{align*}
Finally, we have the following.
\begin{Lem} \label{lem:eta}
If $\Omega$ is a unit ball in $\R^d$ and $\eta$ is defined as in Definition \ref{def:Aux:end-point-function} on $\Omega$ then we have
\begin{equation} \label{eq:regeta}
\underset{r>0}{\sup} \Big( \Delta_v \left[ \psi \left( t,\eta(x,\cdot) \right) \right] ( r v) \Big) \in \mathrm L^\infty\big( (0,T) ;\mathrm L^2( \Omega\times\mathbb{S}^{d-1}) \big)
\end{equation}
for any $\psi \in \mathfrak{D}_T$, where
\begin{align*}
\mathfrak{D}_T :=  
\big\{
\psi \in \mathcal{C}^\infty([0,T)\times \overline{\Omega}) \mbox{ s.t. } \psi(T,\cdot)=0\, \mbox{ and }\,  n(x)\cdot\nabla_x\psi(t,x) = 0
\text{ on } (0,T)\times\partial\Omega 
\big\}.
\end{align*}
\end{Lem}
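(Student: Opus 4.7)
Start from the explicit formula (\ref{eq:devDeltapsieta}), which decomposes $\Delta_v[\psi(t,\eta(x,\cdot))](rv)$ into a first--order piece $\Delta_v\eta\cdot\nabla_x\psi$ and a second--order piece $\Tr\bigl(\nabla_v\eta\,{}^\top\!\nabla_v\eta\, H_x\psi\bigr)$. Since $\Omega\times\mathbb{S}^{d-1}$ has finite measure, it is enough to establish a pointwise a.e. bound of each piece by a constant depending only on $\|\psi\|_{C^2([0,T]\times\overline\Omega)}$, uniformly in $r>0$; the $L^\infty_t L^2_{x,v}$ estimate for $\sup_{r>0}$ then follows at once.

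The second--order piece admits a clean bound. In the interior of each ``cell'' --- the open subset of $\Omega\times\R^d$ on which the number and sequence of reflections along the trajectory from $(x,rv)$ are locally constant --- $\eta$ is a smooth composition of affine translations and specular reflections $\mathcal{R}_{y_j}$, each of which is orthogonal. By Lemma \ref{lem:Aux:exit-times-grazing-set}(i) the trajectory stays away from the grazing set, so $v\cdot n(y_j)\neq 0$ at every reflection and the implicit function theorem applies to each reflection time $\tau_j(x,v)$ with uniform transversality constants. Combined with the rotational symmetry of the unit ball and conservation of angular momentum $x\wedge v$ (which confines each billiard trajectory to the plane spanned by $x$ and $v$), a direct differentiation shows that $\nabla_v\eta$ acts as an isometry on that invariant plane and as the identity transversely to it. Thus $\nabla_v\eta\,{}^\top\!\nabla_v\eta=\mathrm{Id}$, and the second--order piece is pointwise bounded by $\|\Delta_x\psi\|_{L^\infty}$.

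The first--order piece is where the real work lies. In the interior of a cell $\Delta_v\eta$ is smooth but in general nonzero, because both the reflection times $\tau_j(x,v)$ and the reflection operators $R^j(x,v)$ depend on $v$. The plan is to reduce the computation to two dimensions using the invariant plane from the previous paragraph: there the unit--ball billiard consists of chords of constant length $\ell=2\cos\alpha$, where $\alpha$ is the constant angle of incidence determined by $|x\wedge v|$, and a closed--form differentiation in $\alpha$ and in the total path length $r=|v|$ yields a pointwise bound on $\Delta_v\eta$ that is independent of the reflection count. The Neumann condition $n\cdot\nabla_x\psi=0$ from the definition of $\mathfrak{D}_T$ then enters decisively at the cell boundaries, where $\eta(x,rv)\in\partial\Omega$ and the singular part of $\Delta_v\eta$ (in the distributional sense) points normally to $\partial\Omega$: it forces the Dirac contribution to vanish when paired with $\nabla_x\psi(\eta(x,rv))$, exactly as in the boundary computation leading to (\ref{eq:macro:pointwise}).

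The principal obstacle I foresee is this last closed--form bookkeeping: one must verify that the pointwise bound on $\Delta_v\eta$ remains uniform as $r\to\infty$ and the number of reflections on the trajectory from $(x,rv)$ tends to infinity. This is precisely where the full rotational symmetry of the ball is used, rather than the weaker strict convexity (\ref{eq:VFP:strong-convex}) available for a general $\Omega$; indeed, without the exact closure of the billiard under rotations, the derivatives of the reflection points $y_j$ with respect to $v$ accumulate and one loses the required cell--wise uniformity.
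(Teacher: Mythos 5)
There is a genuine gap: your strategy of reducing everything to a pointwise a.e.\ bound, uniform in $r$, and then invoking the finite measure of $\Omega\times\mathbb{S}^{d-1}$ cannot work, because the first--order piece $\Delta_v\eta\cdot\nabla_x\psi(\eta)$ is \emph{not} pointwise bounded. In the disk (to which the planar reduction you correctly describe applies), the explicit second derivatives of $\eta$ show that $\Delta_v\eta$ blows up like $L^{-2}$, where $L=2\sqrt{(x\cdot\underline{v})^2+1-|x|^2}$ is the chord length of the billiard trajectory, which vanishes as $(x,\underline v)$ approaches the grazing set. The Neumann condition on $\psi$ does enter exactly where you place it, but it does not kill a ``Dirac contribution'' at cell boundaries; rather, it gains precisely one power of $L$: the singular part of $\Delta_v\eta$ points within $O(L)$ of the normal at $\eta$, while $\nabla_x\psi(\eta)$ is within $O(\mathrm{dist}(\eta,\partial\Omega))=O(L^2)$ of the tangential direction, so the pairing is $O(1/L)$ --- still unbounded. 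The lemma then holds only because $1/L$ happens to lie in $L^p(\Omega\times\mathbb{S}^1)$ for $p<3$ (in particular $p=2$), a computation that is the actual core of the proof and that your argument never reaches. The statement is genuinely sharp in this sense: taking the supremum over all of $v$ rather than over $|v|$ would replace $1/L$ by $1/\sqrt{1-|x|^2}$, which fails to be in $L^2(\Omega)$, so no pointwise or even $L^\infty$--in--$\underline v$ bound can exist.

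Two further points. First, your claim that $\nabla_v\eta\,{}^\top\!\nabla_v\eta=\mathrm{Id}$ is unjustified and not needed: the map $v\mapsto\eta(x,v)$ is not an isometry once reflections occur (the reflection times depend nontrivially on $v$); what is true, and suffices for the second--order piece, is only that $\nabla_v\eta$ is uniformly bounded. Second, the obstacle you flag --- loss of uniformity as $r\to\infty$ and the reflection count grows --- is not where the difficulty lies: $L$ and the incidence angle are invariants of the trajectory independent of $|v|$, so the supremum over $r$ is harmless; the difficulty is the integrability over $\Omega\times\mathbb{S}^{d-1}$ near the grazing directions.
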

To prove this lemma we study the regularity of the end-point function $\eta(x,v)$, which is rather technical and will be the subject of the appendix of this paper, based on results from \cite{Cesbron_2016}. Nevertheless, this allows us to use the Lebesgue's dominated convergence theorem in $\mathrm L^2(\mathcal{M}(v){\rm d}x{\rm d}v)$ and pass to the limit in the weak formulation \eqref{eq:macro:weak-formulation-2}  as $\eps$ goes to $0$ to get
\begin{align}\label{eq:macro:weakformlimit-bis}
\iint\limits_{(0,T)\times\Omega}
\rho(t,x) \Big( \partial_t \psi(t,x) + \Delta_x \psi(t,x) \Big)
\, {\rm d}x\, {\rm d}t 
+ \int\limits_\Omega 
\rho^{in}(x)\psi(0,x)
\, {\rm d}x 
= 0,
\end{align}
which holds for any $\psi \in \mathfrak{D}_T$. To conclude the proof of Theorem \ref{thm:main:statement}, we need to show that the solution $\rho$ of \eqref{eq:macro:weakformlimit-bis} is a weak solution to the diffusion equation \eqref{eq:main:diffusion-equation}-\eqref{eq:main:diffusion-initial}-\eqref{eq:main:diffusion-neumann}, which is the objective of the following proposition.

\begin{Prop}
If $\rho$ satisfies, for every $\psi\in \mathfrak{D}_T$,
\begin{equation}\label{eq:macro:weakformlimit} 
\iint\limits_{(0,T)\times\Omega}
\rho(t,x) \Big( \partial_t \psi + \Delta_x \psi \Big)(t,x)
\, {\rm d}x\, {\rm d}t 
+ \int\limits_\Omega 
\rho^{in}(x)\psi(0,x)
\, {\rm d}x 
= 0,
\end{equation}
then $\rho$ is the unique solution of the heat equation with homogeneous Neumann boundary condition, i.e., for any $\psi \in \mathrm L^2\big( 0,T; \mathrm H^1(\Omega)\big)$,
\begin{equation} \label{eq:macro:weakheat}
\int\limits_0^T
\left\langle
\partial_t \rho,\, \psi
\right\rangle_{V', V}
\, {\rm d}t 
+ \iint\limits_{(0,T)\times\Omega}
\nabla_x \rho(t,x)\cdot \nabla_x \psi(t,x) 
\, {\rm d}x\, {\rm d}t 
= 0,
\end{equation}
where $V=\mathrm H^1(\Omega)$ and $V'$ is its topological dual.
\end{Prop}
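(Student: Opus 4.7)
The plan is a duality argument against the backward Neumann heat equation. Let $\tilde\rho\in\mathrm L^2(0,T;\mathrm H^1(\Omega))$, with $\partial_t\tilde\rho\in\mathrm L^2(0,T;V')$, denote the unique weak solution of the heat equation with homogeneous Neumann boundary condition and initial datum $\rho^{in}$, whose existence follows from classical parabolic theory. I will show that $\rho=\tilde\rho$, from which \eqref{eq:macro:weakheat} follows at once.

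The first step is to verify that $\tilde\rho$ itself satisfies \eqref{eq:macro:weakformlimit}. For any $\psi\in\mathfrak{D}_T$, integration by parts in time (using $\psi(T,\cdot)=0$) yields
\[
\iint\limits_{(0,T)\times\Omega} \tilde\rho\,\partial_t\psi\,{\rm d}x\,{\rm d}t
= -\int_0^T \langle\partial_t\tilde\rho,\psi\rangle_{V',V}\,{\rm d}t - \int_\Omega \rho^{in}\psi(0,x)\,{\rm d}x,
\]
while Green's identity in space, together with the Neumann condition on $\psi$, gives
\[
\iint\limits_{(0,T)\times\Omega} \tilde\rho\,\Delta_x\psi\,{\rm d}x\,{\rm d}t
= -\iint\limits_{(0,T)\times\Omega} \nabla_x\tilde\rho\cdot\nabla_x\psi\,{\rm d}x\,{\rm d}t.
\]
Adding these two identities and applying \eqref{eq:macro:weakheat} for $\tilde\rho$ tested against $\psi$ recovers exactly \eqref{eq:macro:weakformlimit} for $\tilde\rho$.

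Next, I prove uniqueness of solutions to \eqref{eq:macro:weakformlimit}. Setting $w:=\rho-\tilde\rho\in\mathrm L^2((0,T)\times\Omega)$, linearity gives $\iint w(\partial_t\psi+\Delta_x\psi)\,{\rm d}x\,{\rm d}t=0$ for every $\psi\in\mathfrak{D}_T$. Given an arbitrary $g\in\mathcal{C}^\infty_c((0,T)\times\Omega)$, I solve the terminal-value Neumann problem
\[
\partial_t\psi+\Delta_x\psi=g \text{ in }(0,T)\times\Omega,\quad \psi(T,\cdot)=0,\quad n\cdot\nabla_x\psi=0 \text{ on }\partial\Omega.
\]
By the time reversal $\tilde\psi(t,\cdot):=\psi(T-t,\cdot)$, this reduces to a forward Neumann heat equation with smooth interior source and zero initial data; since $g$ has compact support away from $\partial\Omega$, all higher-order boundary compatibility conditions hold trivially, and classical parabolic regularity produces a solution $\psi\in\mathcal{C}^\infty([0,T]\times\overline\Omega)$ belonging to $\mathfrak{D}_T$. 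Testing yields $\iint wg\,{\rm d}x\,{\rm d}t=0$ for every such $g$, hence $w=0$ a.e., and $\rho=\tilde\rho$.

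The principal obstacle is producing a test function $\psi\in\mathfrak{D}_T$ with prescribed $\partial_t\psi+\Delta_x\psi$, which is equivalent to the existence of $C^\infty$-smooth solutions up to the boundary for the backward Neumann heat equation. The compact support of $g$ in the interior trivialises the compatibility conditions at $\partial\Omega$, so the standard parabolic regularity theory for the Neumann heat equation is enough; the bulk of the proof is therefore the duality bookkeeping rather than any deep PDE estimate.
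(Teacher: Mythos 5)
Your proof is correct, but it takes a genuinely different route from the paper. The paper works directly on the weak formulation \eqref{eq:macro:weakformlimit}: it inserts time-independent test functions $\psi$ solving the elliptic Neumann problem $\Delta_x\psi=\partial_{x_i}u$ with $u\in\mathcal{C}^\infty_c(\Omega)$, uses the a priori bound $\|\nabla\psi\|_{\mathrm L^2}\le\|u\|_{\mathrm L^2}$ to deduce $|\langle\partial_{x_i}\rho,u\rangle|\le C\|u\|_{\mathrm L^2}$, concludes $\rho\in\mathrm L^2(0,T;\mathrm H^1(\Omega))$ and then $\partial_t\rho\in\mathrm L^2(0,T;V')$, and finally upgrades \eqref{eq:macro:weakformlimit} to \eqref{eq:macro:weakheat} by density of $\mathfrak{D}_T$ in $\mathrm L^2(0,T;\mathrm H^1(\Omega))$. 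You instead never prove any regularity of $\rho$ directly: you identify $\rho$ with the classical weak solution $\tilde\rho$ of the Neumann heat equation via a duality argument, solving the backward Neumann problem $\partial_t\psi+\Delta_x\psi=g$ for $g\in\mathcal{C}^\infty_c((0,T)\times\Omega)$ to conclude $\rho-\tilde\rho=0$, and then inherit all the regularity of $\tilde\rho$ for free. What your approach buys is a cleaner logical structure (uniqueness in the very weak class plus existence in the strong class), at the cost of invoking a heavier black box, namely smoothness up to $\overline\Omega$ of solutions to the backward parabolic Neumann problem; your observation that the compact support of $g$ trivialises the corner compatibility conditions is exactly the point that makes this legitimate. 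What the paper's approach buys is self-containedness at the level of estimates: only the elliptic Neumann problem and a density statement are needed, and the $\mathrm H^1$ regularity of $\rho$ is extracted directly from the weak formulation rather than transported from an independently constructed solution. Both arguments are complete; yours is arguably the more standard duality proof, the paper's the more hands-on regularity proof.
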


\begin{proof}
This proof consists in showing that the solution $\rho$ of \eqref{eq:macro:weakformlimit} is regular enough for \eqref{eq:macro:weakheat} to make sense. Once this is established, a classical density argument will conclude the proof of the proposition, and therefore the proof of Theorem \ref{thm:main:statement}, by showing that \eqref{eq:macro:weakheat} holds for any $\psi$ in $\mathrm L^2\big( 0,T;\mathrm H^1(\Omega)\big)$. \\
For any $u \in C^\infty([0,T);C^\infty_c(\Omega))$, we consider the unique solution to the boundary-value problem
\begin{equation} \label{eq:macro:Laplace} 
\left\{ \begin{aligned}  &\Delta_x \psi (t,x) = \frac{\partial u}{\partial x_i}(t,x)  &\mbox{ in } (0,T)\times\Omega,\\
									 &\nabla \psi(t,x) \cdot n(x) = 0 &\mbox{ on } (0,T)\times\partial \Omega,\\
									 &\int\limits_\Omega \psi (t,x) \, {\rm d}x  = 0,
\end{aligned} \right. 
\end{equation}
for any $i \in \{ 1,\cdots, d\}$. Notice that the time variable $t$ in \eqref{eq:macro:Laplace} plays the role of a parameter. It is well known that the solution $\psi$ to \eqref{eq:macro:Laplace} will be in $\mathfrak{D}_T$. To derive the energy estimate, multiply \eqref{eq:macro:Laplace} by $\psi$ and integrate over $\Omega$ yielding
\begin{align}\label{eq:macro:Laplace-energy}
\int\limits_\Omega \psi(t,x) \Delta_x \psi(t,x) \, {\rm d}x = \int\limits_\Omega \psi(t,x) \frac{\partial u}{\partial x_i}(t,x) \, {\rm d}x
\quad
\forall t\in[0,T].
\end{align} 
On the left-hand side, the homogeneous Neumann condition in \eqref{eq:macro:Laplace} yields
\begin{align*}
\left| \int\limits_\Omega \psi(t,x) \Delta_x \psi(t,x) \, {\rm d}x  \right| = \|\nabla \psi(t,\cdot) \|_{\mathrm L^2(\Omega)}^2
\quad
\forall t\in[0,T].
\end{align*}
On the right hand-side of \eqref{eq:macro:Laplace-energy}, since $u$ is compactly supported in $\Omega$ we can write
\begin{align*}
\left| \int\limits_\Omega \psi(t,x) \frac{\partial u}{\partial x_i}(t,x) \, {\rm d}x \right| = \left| \int\limits_\Omega u(t,x) \frac{\partial \psi}{\partial x_i}(t,x) \, {\rm d}x \right| \leq \|u(t,\cdot)\|_{\mathrm L^2(\Omega)} \|\nabla \psi(t,\cdot)\|_{\mathrm L^2(\Omega)}
\quad
\forall t\in[0,T].
\end{align*}
Together with the Poincar\'e inequality, this computation shows that $\|\psi(t,\cdot)\|_{\mathrm L^2(\Omega)} \leq \|u(t,\cdot)\|_{\mathrm L^2(\Omega)}$ for all $t\in[0,T]$. Taking the thus constructed $\psi(t,x)$ as the test function in the formulation \eqref{eq:macro:weakformlimit}, we get
\begin{align*}
\left| \, \, \,
\iint\limits_{(0,T)\times\Omega}
\rho(t,x) \frac{\partial u}{\partial x_i}
\, {\rm d}x\, {\rm d}t
\right|
\le 
\left|
\int\limits_\Omega 
\rho^{in}(x) \psi(0,x) 
\,{\rm d}x
\right|
+
\left| \,\,\,
\iint\limits_{(0,T)\times\Omega}
\rho(t,x)
\partial_t \psi(t,x)
\, {\rm d}x\, {\rm d}t
\right|,
\end{align*}
which, in particular, implies that for any $u\in\mathcal{D}(\Omega)$, considering $\psi$ that doesn't depend on $t$ and with a constant $C=\Vert\rho^{in}\Vert_{\mathrm L^2(\Omega)}$, we arrive at the following control
\begin{align*}
\left|
\int\limits_0^T
\left\langle
\frac{\partial \rho}{\partial x_i}, u
\right\rangle_{\mathcal{D}'(\Omega), \mathcal{D}(\Omega)}
\, {\rm d}t
\right|
\le 
C \|u\|_{\mathrm L^2(\Omega)}.
\end{align*}
The above observation implies that
\begin{align*}
\rho\in \mathrm L^2(0,T;\mathrm H^1(\Omega)).
\end{align*}
It is a classical matter to show that $\mathfrak{D}_T$ is dense in $\mathrm L^2(0,T;\mathrm H^1(\Omega))$. Using the above regularity of $\rho$ in \eqref{eq:macro:weakformlimit} and taking $\psi\in \mathrm L^2(0,T;\mathrm H^1(\Omega))$ would yield the following regularity on the time derivative
\begin{align*}
\partial_t\rho\in \mathrm L^2(0,T;V'),
\end{align*}
where $V'$ is the topological dual of $V=\mathrm H^1(\Omega)$. Thus, we have proved that the limit local density $\rho(t,x)$ is the unique solution of the weak formulation \eqref{eq:macro:weakheat}.
\end{proof}

\begin{Rem}\label{rem:unit-ball-eta}
Note that the result in Lemma \ref{lem:eta} is given for a particular choice of the spatial domain -- a ball in $\R^d$. We are unable so far to prove a similar regularity result in more general strictly convex domains.
\end{Rem}

\section{Appendix}

In this section, we let the spatial domain $\Omega$ be the unit ball in $\R^d$. We consider the trajectories in $\Omega$ described by \eqref{eq:etaexplicit} and the associated end-point function $\eta(x,v)$. The purpose of this section is to prove Lemma \ref{lem:eta}.
\begin{proof}[Proof of Lemma \ref{lem:eta}]
We first note that a trajectory in $\Omega$ is necessarily included in a plane of dimension 2. Indeed, by definition of the specular reflection, when the trajectory hits the boundary, the reflected velocity is a linear combination of the initial velocity and the normal vector: $Rv = v - 2 \left( n(x+sv) \cdot v\right) n(x+sv)$ where $n(x+sv)=x+sv$, since $\Omega$ is a unit ball. Since the normal vector belongs to the plane generated by $x$ and $v$, we see that the reflected velocity also belongs to that same plane, and the same goes for every reflected velocity along this trajectory. As a consequence, we restrict the study of the regularity of the end-point function $\eta(x,v)$ in a ball to the case of a disk in dimension $d=2$.\\
Let us recall that
\begin{align*}
\Delta_v & \left[ \psi \left( t,\eta(x,v) \right) \right] 
= \Delta_v \eta(x, v) \cdot \nabla_x \psi \left( t,\eta(x, v) \right) + \Tr \left( \nabla_v\eta(x, v) {}^\top\!\nabla_v\eta(x, v) H_x\psi \left( t,\eta(x, v) \right) \right).
\end{align*}
From \cite[Appendix A.2]{Cesbron_2016}, we recall that $\na_v \eta(x,v)$ is uniformly bounded in $x$ and $v$, so the second term in the above expression is immediately handled. For the first term, if we write $k$ the number of reflections and define $A$ the angle of reflection and $L$ the length between two reflections, illustrated in Figure \ref{fig:notation} and constant along a trajectory, 
\begin{figure}[h]
\centering
\includegraphics[width=11cm,height=8.5cm]{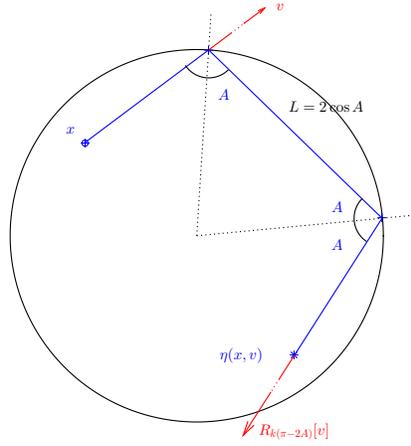}
\caption{Trajectory with 2 reflections in the circle}
\label{fig:notation}
\end{figure}
then from the expression of $D^2 \eta(x,v)$, again borrowed from \cite[Appendix A.2]{Cesbron_2016}, it is easy to see that the Laplacian of $\eta$ can be written as
\begin{align*}
\Delta \eta(x,v) =  \frac{1}{L^2} \lambda SR_{k(\pi-2A)} \frac{v}{|v|} + C
\end{align*}
where $\lambda=\lambda(x,v)$ and $C=C(x,v)$, both uniformly bounded in $x$ and $v$, $S$ is the symmetry matrix: $S=\begin{pmatrix} 0 & 1 ; -1 , 0 \end{pmatrix}$, and $R_{k(\pi-2A)}$ is the rotation matrix of angle $k(\pi-2A)$. \\
Moreover, when we start close to the grazing set, the trajectory stays close to the grazing set (because $A$ is a constant close to $\pi/2$), which means $R_{k(\pi-2A)} v/|v|$ stays close to $\tau(\eta(x,v))$, then tangent of $\Omega$ at $\eta(x,v)/|\eta(x,v)| \in \pa\Omega$. In fact it will be furthest from the tangent when $\eta(x,v)$ is on the boundary where we have
\begin{align*}
R_{k(\pi-2A)} \frac{v}{|v|} &= \big( \cos A \big)  n\big(\eta(x,v)\big) + \big( \sin A  \big) \tau\big(\eta(x,v)\big) \\
			&= \Big( \frac{1}{2} L \Big) n\big(\eta(x,v)\big) + \Big(1- \frac{L^2}{4}\Big)^{1/2} \tau \big(\eta(x,v)\big)
\end{align*}
so that 
\begin{align*}
SR_{k(\pi-2A)} \frac{v}{|v|}= n\big(\eta(x,v)\big)  + O(L)
\end{align*}
where $n(\eta(x,v))$ is the outward normal at $\eta(x,v)/|\eta(x,v)|\in\pa\Omega$. \\
Furthermore, if we consider $\psi\in\mathfrak{D}_T$ then on the boundary, $\na \psi(x,v) \cdot n(x) =0$ hence, by the regularity of $\psi$, when $\eta(x,v)$ is close the boundary we have
\begin{align*}
\na \psi\big(\eta(x,v)\big)\Big) = \tau\big( \eta(x,v) \big) + O\Big(\text{dist}\big( \eta(x,v) , \pa\Omega \big)\Big).
\end{align*}
We can bound the distance between $\eta(x,v)$ and the boundary in terms of $L$ because we are in a circle so the $\eta(x,v)$ is furthest from the boundary when it is in the middle between two reflections and the Pythagorean theorem tells us in that case
\begin{align*}
\Big( 1 - \text{dist}\big(\eta(x,v), \pa\Omega \big) \Big)^2 + \Big( \frac{L}{2} \Big)^2 = 1
\end{align*}
so that we have all along the trajectory
\begin{align*}
\text{dist}\big(\eta(x,v), \pa\Omega\big) = 1 - \sqrt{1- \frac{L^2}{4}}  = \frac{L^2}{4} + o(L^2).
\end{align*}
All together, this yields
\begin{align*}
\Delta \eta \cdot \na \psi\big(\eta(x,v)\big) &= \frac{\lambda}{L} SR_{k(\pi-2A)}\frac{v}{|v|} \cdot \na \psi\big(\eta(x,v)\big) + O(1)\\
																	&= O\Big(\frac{1}{L} \Big).
\end{align*}
To investigate the integrability of $1/L$ we express $L$ in terms of $x$ an $v$. Since $L=2\cos A$ where $\cos A = (x+tv)\cdot v$ for some $t$ such that $|x+tv|^2 =1$ one can deduce that
\begin{align*}
L = 2 \sqrt{ (x\cdot \underline{v})^2 + 1-|x|^2}
\end{align*}
where $\underline{v} = v/|v|$. Note in the fact that $L$ does not depend on the norm of $v$ from which we see that we can take the supremum over the norm of $v$ and it won't impact the integrability in $\mathrm L^2(M(v) {\rm d}x {\rm d}v)$ since $M$ is radial and normalized. \\
To conclude the proof of Lemma \ref{lem:eta}, we use a polar change of variables to write for some $p>0$,
\begin{align*}
\underset{\Omega\times\mathbb{S}^1}{\iint}& \Big(\frac{2}{L}\Big)^p {\rm d} x {\rm d} \underline{v} =  \underset{\Omega\times\mathbb{S}^1}{\iint} \frac{1}{\Big( (x\cdot \underline{v})^2 + (1-|x|^2) \Big)^{p/2}} {\rm d} x {\rm d} \underline{v} \\
&= \int_0^{1} \int_0^{2\pi} \int_0^{2\pi} \frac{\rho_x}{\big( 1- \rho_x^2 \sin^2(\theta_v-\theta_x) \big)^{p/2}} \, {\rm d} \rho_x \, {\rm d}\theta_x \, {\rm d}\theta_v \\
&= \int_0^{1} \int_0^{2\pi} \int_0^{2\pi} \frac{\rho_x}{\big( 1- \rho_x| \sin(\theta_v-\theta_x)| \big)^{p/2} \big( 1+ \rho_x| \sin(\theta_v-\theta_x)| \big)^{p/2}} \, {\rm d}\rho_x \, {\rm d}\theta_x \, {\rm d}\theta_v \\
&\leq  C  \int_0^{1} \int_0^{2\pi} \int_0^{2\pi} \frac{\rho_x}{\big( 1- \rho_x| \sin(\theta_v-\theta_x)| \big)^{p/2}} \, {\rm d}\rho_x \, {\rm d}\theta_x \, {\rm d}\theta_v \\
&\leq   2 \pi C  \int_0^{1} \int_0^{2\pi} \frac{\rho_x}{\big( 1- \rho_x| \sin\alpha| \big)^{p/2}} \, {\rm d}\rho_x \, {\rm d}\alpha \\
&\leq  \tilde{C}  \int_0^1 \int_0^{\sqrt{1-x_2^2}} \frac{1}{\big( 1 - x_2)^{p/2}} {\rm d}x_1 {\rm d}x_2  \\
&\leq  \tilde{C} \int_0^1 \frac{1}{(1-x_2)^{p/2-1/2}} {\rm d}x_2 
\end{align*}
hence $1/L$ will be in $L^p_{F(v)}(\Omega\times\R^2)$ if $p < 3$. \\
As a remark, note however that if we took the supremum in $v$ instead $|v|$ then $1/L$ would be equivalent to $1/\sqrt{1-|x|^2}$ which is in $L^{2-\delta}(\Omega)$ for all $\delta>0$ but not for $\delta=0$. 
\end{proof}


\bibliography{biblio}

\end{document}